%


 \documentclass[OJMO,NoBabel,manuscript]{cedram}

\usepackage[nameinlink,capitalise]{cleveref}
\usepackage{frame}
\usepackage{framed}
\usepackage{nccmath}
\crefalias{algorithm}{section}
\crefname{algorithm}{algorithm}{algorithms}
\Crefname{algorithm}{Algorithm}{Algorithms}
\creflabelformat{algorithm}{#2\textup{#1}#3}
\crefname{theo}{Theorem}{Theorems}


\DeclareMathOperator{\ri}{ri}

\DeclareMathOperator*{\argmin}{argmin}

\DeclareMathOperator{\FmuL}{\mathcal{F}_{\mu,L}}
\DeclareMathOperator{\Fmu}{\mathcal{F}_{\mu,\infty}}
\DeclareMathOperator{\Fccp}{\mathcal{F}_{0,\infty}}
\DeclareMathOperator{\PDg}{\mathrm{PD}}

\newcommand{\modif}[1]{{#1}}

\newcommand{\Rd}{\mathbb{R}^d}

\newcommand{\N}{\mathbb{N}}

\newcommand{\normsq}[1]{{\lVert #1\rVert ^2}}

\newcommand{\prox}{\mathrm{prox}}
\newcommand{\dom}{\mathrm{dom}}
\newcommand{\norm}[1]{{\lVert #1\rVert}}

\usepackage {tikz}
\usepackage{pgfkeys}
\usetikzlibrary{shapes,arrows}
\usepackage{pgfplots}
\pgfplotsset{compat=1.13}
\pgfplotsset{plotOptions/.style={%
		label style={font=\scriptsize},
		legend style={font=\scriptsize},
		tick label style={font=\scriptsize},
		solid,
		very thick
	}}
\definecolor{colorP1}{RGB}{55,126,184}  
\definecolor{colorP2}{RGB}{228,26,28}  
\definecolor{colorP3}{RGB}{152,78,163} 
\definecolor{colorP4}{RGB}{77,175,74}  
\definecolor{colorP5}{RGB}{250, 150, 10} 
\definecolor{colorP6}{cmyk}{0,0.5,1,0}



\newcommand{\citet}[1]{\cite{#1}}
\newcommand{\citep}[1]{\cite{#1}}

\title[Inexact accelerated forward-backward]{A note on approximate accelerated forward-backward methods with absolute and relative errors, and possibly strongly convex objectives}

\author{\firstname{Mathieu} \lastname{Barr\'e}}
\address{INRIA (Sierra project-team) -- D\'ept. d'informatique,
	Ecole normale sup\'erieure, CNRS, 
	PSL Research University, Paris, France.}
\email{mathieu.barre@inria.fr}
\author{\firstname{Adrien} \lastname{Taylor}}
\address{INRIA (Sierra project-team) -- D\'ept. d'informatique,
	Ecole normale sup\'erieure, CNRS, 
	PSL Research University, Paris, France.}
\email{adrien.taylor@inria.fr}
\author{\firstname{Francis} \lastname{Bach}}
\address{INRIA (Sierra project-team) -- D\'ept. d'informatique,
	Ecole normale sup\'erieure, CNRS, 
	PSL Research University, Paris, France.}
\email{francis.bach@inria.fr}
\thanks{MB acknowledges support from an AMX fellowship. The authors acknowledge support from the European Research Council (grant SEQUOIA 724063).This work was funded in part by the french government under management of Agence Nationale de la recherche as part of the ``Investissements d'avenir'' program, reference ANR-19-P3IA-0001 (PRAIRIE 3IA Institute).} 


  

\begin{abstract} 
In this short note, we provide a simple version of an accelerated forward-backward method (a.k.a. Nesterov's accelerated proximal gradient method) possibly relying on approximate proximal operators and allowing to exploit strong convexity of the objective function. The method supports both relative and absolute errors, and its behavior is illustrated on a set of standard numerical experiments.

Using the same developments, we further provide a version of the accelerated proximal hybrid extragradient method of~\cite{monteiro2013accelerated} possibly exploiting strong convexity of the objective function.
\end{abstract}

\begin{document}

\maketitle

\section{Introduction}\label{sec:intro}
In this work, we consider a standard composite convex minimization problem of the form
\begin{equation}\label{eq:opt}
    \min_{x\in\mathbb{R}^d} \left\{F(x)\equiv f(x)+g(x) \right\},
\end{equation}
where $f:\mathbb{R}^d\rightarrow \mathbb{R}$ is a $L$-smooth convex function (with $0<L<\infty$), and $g:\mathbb{R}^d\rightarrow \mathbb{R}\cup\{+\infty\}$ is a proper closed convex function. In addition, we allow either $f$ or $g$ to be possibly $\mu$-strongly convex. In this setting, we propose an inexact accelerated forward-backward method for solving~\eqref{eq:opt} relying on the access to the gradient of $f$, and to an iterative routine for approximating the proximal operator of $g$.

\paragraph*{Relation to previous works}  The main algorithms presented in this note were originally presented in~\citet{barre2020principled}, along with their worst-case analyses. It was removed from~\citet{barre2020principled} for exposition and length purposes. The same methods were then re-analyzed and used by~\citet{alves2021variants} for accelerating higher-order tensor algorithms.

When the proximal operator of $g$ is readily available, the method presented below becomes a variant of standard accelerated (or fast) forward-backward (or proximal gradient) methods for convex minimization, see e.g.,~\citet{nesterov2013gradient,beck2009fast}, and the introductory survey by~\citet{d2021acceleration}. 

Purely backward versions ($f=0$) emerged earlier from the works of~\citet{guler1992new} and~\citet{monteiro2013accelerated,salzo2012inexact}, whereas the first purely forward version ($g=0$) was developed by~\citet{Nesterov:1983wy}. The first inexact versions of accelerated forward-backward methods that we are aware of were presented in~\citet{schmidt2011convergence,villa2013accelerated,jiang2012inexact}, whereas versions with relative errors appeared more recently in \citet{millan2019inexact,Bello2020}. In contrast, our method allows handling different types of error (namely absolute and relative errors of different types), while allowing to exploit strong convexity of $f$ or $g$---see e.g.~\citet{Book:Nesterov,nesterov2013gradient,chambolle2016introduction}, for original analyses in the strongly convex case, when the proximal operator of $g$ is readily available. The same developments allow obtaining a version of the accelerated hybrid proximal extragradient method (A-HPE)---in the spirit of~\citet{monteiro2013accelerated}---for exploiting strong convexity of the problem at hand.

The notion of an ``approximate proximal point'' used in this note (see \Cref{s:prox}) was used in a few previous works, starting with the hybrid extragradient method \citep{solodov1999hybrid,solodov2000comparison}. It was also used for its accelerated version~\citep{monteiro2013accelerated} and in the context of another forward-backward splitting method~\citep{millan2019inexact}. In these works, the primal-dual requirement is presented under a different formulation involving the notion of $\varepsilon$-subdifferentials \cite[Section 3]{brondsted1965subdifferentiability} (or $\varepsilon$-enlargement in the context of monotone operators \citep{burachik1997enlargement,burachik1998varepsilon,burachik2001robustness}). Among others, a variant of the hybrid extragradient method was also studied in~\citep{burachik2001robustness} under both absolute and relative errors, similar in spirit with the accelerated methods presented below. A survey on common notions of ``approximate proximal point'' used in the literature can be found in \cite[Section 2]{barre2020principled_new}.

\paragraph*{Paper organization and contribution} This note is organized as follows. First, we give some basic results and notations in \cref{s:backg}. We provide the inexact accelerated forward-backward in \cref{s:accel_FB}, along with a worst-case analysis, relying on a standard Lyapunov argument (for which we provide symbolic notebooks, helping the reader reproducing the algrebraic part of the proof without pain). Numerical experiments illustrating the practical behavior of the method are then provided in \cref{s:numerics}. After that, \cref{s:ahpe} shows how to slightly modify the proof for obtaining an accelerated hybrid proximal extragradient method~\citep{monteiro2013accelerated}, specifically for the case $f=0$. We draw some conclusions in \cref{s:ccl}.

\paragraph*{Notations} We refer to classical textbooks~\citep{Book:Rockafellar,hiriart2013convex} for standard elements of convex analysis. We use the notation $\Fccp(\Rd)$ to denote the set of closed convex proper functions on $\Rd$. The corresponding subset of closed convex proper functions that are $\mu$-strongly convex and $L$-smooth (with $0\leq \mu < L\leq \infty$) is denoted $\FmuL(\Rd)$. That is, $h\in\FmuL(\mathbb{R}^d)$ if and only if
\begin{itemize}
    \item ($\mu$-strong convexity) $\forall x,y\in\mathbb{R}^d$, $s_h(x)\in\partial h(x)$, $s_h(y)\in\partial h(y)$, it holds $\|s_h(x)-s_h(y)\|\geq \mu \|x-y\|$,
    \item ($L$-smoothness) $\forall x,y\in\mathbb{R}^d$, $s_h(x)\in\partial h(x)$, $s_h(y)\in\partial h(y)$, it holds $\|s_h(x)-s_h(y)\|\leq L \|x-y\|$,
\end{itemize}
where $\partial h(x)$ denotes the subdifferential of $h$ at $x\in\mathbb{R}^d$. When $h\in\FmuL(\mathbb{R}^d)$ with $L<\infty$, we use $h'(x)$ to denote the unique element $h'(x)\in\partial h(x)$ (i.e. the gradient of $h$ at $x$).

\paragraph*{Codes} For helping the reader reproducing the analytical results (via Mathematica notebooks) as well as numerical experiments, our code is available at
\begin{center}
    \url{https://github.com/mathbarre/StronglyConvexForwardBackward}.
\end{center}

\section{Background results}\label{s:backg}
\subsection{ Smooth strongly convex functions}\label{s:ssc_functions}
We recall some standard inequalities satisfied by smooth convex and strongly convex functions, which we use in the sequel for exploiting strong convexity and smoothness, see e.g.~\citet{Book:Nesterov}. 
\begin{prop}[$\mu$-strong convexity]\label{prop:strcvx}
Let $g\in\Fmu(\Rd)$. For all $x,y\in\Rd$ and all $s_g(x)\in \partial g(x)$ it holds that
\[g(y)\geq g(x) + \langle s_g(x),y-x\rangle + \tfrac{\mu}{2}\normsq{x-y}. \]
\end{prop}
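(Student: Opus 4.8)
The plan is to reduce the statement to the plain convexity of the shifted function $h:=g-\tfrac{\mu}{2}\normsq{\cdot}$, since for an ordinary convex function the subgradient inequality holds with no quadratic remainder. Concretely, I would first establish that $g\in\Fmu(\Rd)$ forces $h$ to be convex and that, whenever $s_g(x)\in\partial g(x)$, the shifted vector $s_g(x)-\mu x$ belongs to $\partial h(x)$. Writing the subgradient inequality for $h$ at $x$ and reorganizing the quadratic terms then delivers the claim.

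The algebraic step is routine. Applying the subgradient inequality to the convex function $h$ gives $h(y)\geq h(x)+\langle s_g(x)-\mu x,\,y-x\rangle$. Substituting $h=g-\tfrac{\mu}{2}\normsq{\cdot}$ and collecting terms, the quadratic remainder that appears is $\tfrac{\mu}{2}\normsq{y}-\tfrac{\mu}{2}\normsq{x}-\mu\langle x,y-x\rangle$, which equals $\tfrac{\mu}{2}\normsq{x-y}$ by the identity $\normsq{y}-\normsq{x}-2\langle x,y-x\rangle=\normsq{x-y}$. This recovers exactly the desired inequality $g(y)\geq g(x)+\langle s_g(x),y-x\rangle+\tfrac{\mu}{2}\normsq{x-y}$, and it handles the extended-valued case automatically since the inequality is trivial when $g(y)=+\infty$.

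The main obstacle is the first step: deriving convexity of $h$ (equivalently, strong monotonicity of $\partial g$) from the norm-form definition $\norm{s_g(x)-s_g(y)}\geq\mu\norm{x-y}$ used in the excerpt, which looks weaker than the inner-product form. Here I would pass to the convex conjugate. Since $g$ is closed convex proper, $(x,u)\in\mathrm{graph}\,\partial g$ if and only if $(u,x)\in\mathrm{graph}\,\partial g^*$. The norm condition, transposed, states that $u\in\partial g(x)\cap\partial g(y)$ forces $x=y$ (take $s_g(x)=s_g(y)=u$), so $\partial g^*$ is single-valued, and moreover $\norm{\partial g^*(u)-\partial g^*(v)}\leq\tfrac{1}{\mu}\norm{u-v}$, i.e. $g^*$ is $\tfrac{1}{\mu}$-smooth. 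By the Baillon--Haddad theorem its gradient is $\mu$-cocoercive, that is $\langle\partial g^*(u)-\partial g^*(v),\,u-v\rangle\geq\mu\norm{\partial g^*(u)-\partial g^*(v)}^2$. Transposing back yields $\langle s_g(x)-s_g(y),\,x-y\rangle\geq\mu\normsq{x-y}$, the strong monotonicity of $\partial g$, which is precisely the convexity of $h$.

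As an alternative to the conjugacy argument, once strong monotonicity $\langle s_g(x)-s_g(y),x-y\rangle\geq\mu\normsq{x-y}$ is in hand one can bypass $h$ and integrate directly: setting $\phi(t)=g(x+t(y-x))$ for $t\in[0,1]$, strong monotonicity between the points $x+t(y-x)$ and $x$ gives $\phi'(t)\geq\langle s_g(x),y-x\rangle+\mu t\normsq{y-x}$, and integrating over $[0,1]$ reproduces the stated bound. I would present the $h$-based argument as the primary route, flagging the conjugacy/cocoercivity chain as the only nontrivial ingredient.
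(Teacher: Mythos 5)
The paper offers no proof of this proposition: it is recalled as a standard inequality with a pointer to Nesterov's textbook, where $\mu$-strong convexity is \emph{defined} either by this very inequality or by convexity of $g-\tfrac{\mu}{2}\normsq{\cdot}$. Under that convention, your first two paragraphs (pass to $h=g-\tfrac{\mu}{2}\normsq{\cdot}$, write the subgradient inequality for $h$ using the exact sum rule $\partial g(x)=\partial h(x)+\mu x$, and collect terms via $\normsq{y}-\normsq{x}-2\langle x,y-x\rangle=\normsq{x-y}$) already constitute the entire intended argument, and they are correct, including the remark about the extended-valued case. What is genuinely different in your proposal is the bridge from the paper's subgradient-norm definition $\norm{s_g(x)-s_g(y)}\geq\mu\norm{x-y}$ to the inner-product form: the paper simply treats the two as interchangeable, so your conjugacy argument is added content rather than a rederivation, and you are right that in dimension greater than one the norm form does not yield strong monotonicity by elementary manipulation alone.

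That said, the step you call the only nontrivial ingredient is stated too quickly at exactly its delicate point. The transposed norm condition gives single-valuedness and $\tfrac{1}{\mu}$-Lipschitz continuity of $\partial g^*$ \emph{on its domain} $\mathrm{range}\,\partial g$, whereas Baillon--Haddad, as usually stated, requires $g^*$ to be differentiable on all of $\R^d$; you must additionally show $\dom g^*=\R^d$. This is true but needs an argument: if $\bar u\in\dom\partial g^*$ were a (relative) boundary point of $\dom g^*$, then $\partial g^*(\bar u)+N_{\dom g^*}(\bar u)\subseteq\partial g^*(\bar u)$ would force multivaluedness, contradicting expansiveness of $\partial g$; and if $\bar u$ were a boundary point of $\mathrm{int}\,\dom g^*$ not yet known to carry a subgradient, the Lipschitz bound keeps $\nabla g^*$ bounded along a segment approaching $\bar u$, so closedness of $g^*$ produces a subgradient at $\bar u$ and returns you to the previous contradiction; hence the domain is full and $g^*$ is globally $\tfrac1\mu$-smooth. (Alternatively, cite the classical equivalence that $g$ is $\mu$-strongly convex iff $g^*$ is $\tfrac1\mu$-smooth.) Two smaller remarks: the single-valuedness deduction uses $\mu>0$, which matches \Cref{def:criterion} but means the $\mu=0$ case of the proposition should be dispatched separately as the plain subgradient inequality; and once $g^*$ is known to be $\tfrac1\mu$-smooth with $\nabla g^*(u)=x$ for $u\in\partial g(x)$, you can finish more directly by bounding $g^*(v)\leq g^*(u)+\langle x,v-u\rangle+\tfrac{1}{2\mu}\normsq{v-u}$, substituting into $g(y)=\sup_v\{\langle v,y\rangle-g^*(v)\}$, and using Fenchel--Young; this yields the stated inequality in three lines and lets you skip both Baillon--Haddad and the ``strong monotonicity of $\partial g$ implies convexity of $h$'' link, whose integration fallback itself needs care when the segment $[x,y]$ meets the boundary of $\dom g$, where $\partial g$ may be empty.
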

\begin{prop}[$L$-smoothness \& convexity]\label{prop:smooth}
Let $f\in\mathcal{F}_{0,L}(\Rd)$ with $L <+\infty$. For all $x,y\in\Rd$ it holds that
\[f(y)\geq f(x) + \langle f'(x), y-x\rangle + \tfrac{1}{2L}\normsq{f'(x)-f'(y)}. \]
\end{prop}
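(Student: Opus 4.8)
The plan is to reduce the claim to the standard descent lemma via a shift-and-minimize argument. First I would observe that the stated $L$-smoothness hypothesis, namely Lipschitz continuity of $f'$, yields through integration of the gradient along the segment $[w,z]$ the quadratic upper bound (descent lemma)
\[ f(z)\leq f(w) + \langle f'(w),z-w\rangle + \tfrac{L}{2}\normsq{z-w}\quad\text{for all }z,w\in\Rd. \]
This is the only genuinely computational ingredient, and it is routine: one writes $f(z)-f(w)-\langle f'(w),z-w\rangle=\int_0^1 \langle f'(w+t(z-w))-f'(w),z-w\rangle\,dt$ and bounds the integrand using $\norm{f'(w+t(z-w))-f'(w)}\leq Lt\norm{z-w}$.

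Next I would fix $x\in\Rd$ and introduce the auxiliary function $\phi(z)=f(z)-\langle f'(x),z\rangle$. Since $f$ is convex and the subtracted term is linear, $\phi$ is convex and $L$-smooth with $\phi'(z)=f'(z)-f'(x)$; in particular $\phi'(x)=0$, so by convexity $x$ is a global minimizer of $\phi$. The key step is then to evaluate the descent lemma for $\phi$ at a gradient step taken from an arbitrary point $z$: applying the inequality with $w=z$ to the point $z-\tfrac{1}{L}\phi'(z)$ and using minimality of $\phi(x)$ gives
\[ \phi(x)\leq \phi\!\left(z-\tfrac{1}{L}\phi'(z)\right)\leq \phi(z)-\tfrac{1}{2L}\normsq{\phi'(z)}. \]
Specializing to $z=y$ and unpacking $\phi(x)=f(x)-\langle f'(x),x\rangle$, $\phi(y)=f(y)-\langle f'(x),y\rangle$, and $\phi'(y)=f'(y)-f'(x)$ rearranges exactly into the claimed bound.

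The main obstacle is conceptual rather than computational: it lies in recognizing that both hypotheses must be used in tandem. Smoothness supplies the descent-lemma upper bound, while convexity is precisely what upgrades the stationarity condition $\phi'(x)=0$ into global minimality of $\phi$ at $x$; the gradient-step estimate only closes when these two facts are combined. Once the auxiliary shift $\phi$ and the gradient-step point $z-\tfrac{1}{L}\phi'(z)$ are chosen, the remaining algebra is a direct substitution.
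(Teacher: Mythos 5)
Your proof is correct and complete: the descent lemma, the shifted function $\phi(z)=f(z)-\langle f'(x),z\rangle$ minimized at $x$, and the gradient-step evaluation together give exactly the claimed lower bound. The paper itself states this proposition without proof, recalling it as a standard fact from the cited textbook of Nesterov, and your argument is precisely the classical proof given there (Theorem 2.1.5), so there is nothing to reconcile between the two.
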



In the sequel, the use of the inequalities provided by~\Cref{prop:strcvx} and~\Cref{prop:smooth} are motivated by their \emph{interpolation} (or extension) properties; that is, the analyses provided below were obtained following a principled approach to worst-case analyses of first-order methods, see e.g.,~\citet{taylor2017smooth} or~\citet{barre2020principled_new} specifically for the cases of methods relying on approximate proximal operations.

\subsection{ Proximal operations}\label{s:prox}
The proximal operation is a basic primitive that is widely used in modern optimization methods; it is a central building blocks in many optimization algorithms, see e.g.,~\citet{parikh2014proximal,ryu2016primer}. The proximal operator of a function $g\in \Fccp(\Rd)$ with step size $\lambda > 0$ is defined as 
\begin{equation}\label{eq:prox}
    \prox_{\lambda g}(z) = \underset{x\in\Rd}{\argmin}\,\left\{ \lambda g(x) + \tfrac{1}{2}\normsq{x-z}\right\},
\end{equation}
with $z\in \Rd$. When $g\in \Fccp(\Rd)$, the proximal operation is well defined, and its solution is unique. A comprehensive list of cases where~\eqref{eq:prox} has an analytical solution is provided in~\citet{chierchia2020proximity}. In other cases, the proximal operator has to be approximated. For doing that, one can define the following primal and dual problems associated to the proximal operation 
\begin{align}
     \underset{x\in\Rd}{\min}&\,\{\Phi_\mathrm{p}(\modif{x;\,z})\equiv   \lambda g(x) + \tfrac{1}{2}\normsq{x-z}\}, \label{eq:P}\tag{P}\\
     \underset{v\in\Rd}{\max}&\,\{\Phi_\mathrm{d}(\modif{v;\,z})\equiv  -\lambda g^*(v) - \tfrac{1}{2}\normsq{z - \lambda v} + \tfrac{1}{2}\normsq{z}\},\label{eq:D} \tag{D}
\end{align}
where $g^*\in\Fccp(\Rd)$ is the Fenchel conjugate of $g$. Let us further note that $\prox_{\lambda g}(z)$ is the unique solution to~\eqref{eq:P}, and that $\prox_{g^*/\lambda}(z/\lambda)$ is the unique solution of \eqref{eq:D}. In this context, the primal and dual solutions are linked by the well-known Moreau's identity $\prox_{\lambda g}(z) +\lambda\,\prox_{{g^*}/{\lambda}}\left(z/{\lambda}\right)= z$.

Under relatively weak conditions (such as $\ri(\dom g) \neq \emptyset$, see e.g.,~\cite[Corollary 31.2.1]{Book:Rockafellar}), strong duality holds between~\eqref{eq:P} and~\eqref{eq:D} and hence
\[\underset{x\in\Rd}{\min}\,\Phi_\mathrm{p}(\modif{x;\,z}) = \underset{v\in\Rd}{\max}\,\Phi_\mathrm{d}(\modif{v;\,z}). \]
Motivated by those elements, we use the quantity 
\begin{equation}\label{eq:PD}\tag{PD}
    \PDg_{\lambda g}(x,v;\,z) = \Phi_\mathrm{p}(\modif{x;\,z}) - \Phi_\mathrm{d}(\modif{v;\,z}), 
\end{equation}
for quantifying how well $(x,v)$ approximates the pair $(\prox_{\lambda g}(z),\prox_{g^*/\lambda}(z/\lambda))$, in the sequel. 

\subsection{ A notion of approximate proximal point}
In this section, we define the notion of approximate proximal point of $g\in\Fmu(\Rd)$ used throughout the paper (see \Cref{sec:intro}~\S``{Relation to previous works}'' for historical references for the case $\mu=0$). This notion features two parameters: a tolerance and a lower bound on the strong convexity parameter of $g$ (possibly $0$). The estimate of the strong convexity is used for relating proximal points of $g(\cdot)$ in terms of that of $g_\mu(\cdot)=g(\cdot)-\tfrac{\mu}{2}\|\cdot\|^2\in\Fccp(\Rd)$, and the tolerance is used for quantifying the quality of an approximate solution to the proximal problem on $g_\mu(\cdot)$, which simplifies the analyses below. More precisely, for $g\in\Fmu(\Rd)$, it is relatively straightforward to verify that
\[\prox_{\lambda g}(z) = \prox_{\tfrac{\lambda}{1+\lambda\mu}g_\mu}\left( \tfrac{z}{1+\lambda\mu}\right),\]
with $g_\mu(x) = g(x) - \tfrac{\mu}{2}\normsq{x}$. This observation motivates the introduction of the following inexactness criterion.
\begin{defi}\label{def:criterion} Let $\mu> 0$, $g\in \Fmu(\Rd)$, and let $\lambda>0$ be a step size and $\varepsilon\geq0$ be a tolerance. For a triplet $(x,v,y)\in \Rd\times \Rd\times\Rd$ we use the notation 
\[(x,v) \approx_{\varepsilon,\mu} \left(\prox_{\lambda g}(y),\prox_{\tfrac{g*}{\lambda}}(\tfrac{y}{\lambda}) \right), \]
for denoting that
\[\PDg_{\tfrac{\lambda}{1+\lambda\mu} g_\mu }\left(x,v-\mu x;\tfrac{y}{1+\lambda\mu}\right) \leq \varepsilon,\]
with $g_\mu(x) = g(x)-\tfrac{\mu}{2}\|x\|^2$ and $\PDg$ is the primal-dual gap of the proximal problem defined in \eqref{eq:PD}.
\end{defi}
In the following technical lemma, we provide an explicit expression for quantifying the quality of a triplet $(x,v,y)\in \Rd\times \Rd\times\Rd$ in light of \Cref{def:criterion}.
\begin{lemm}
Let $\mu\geq 0$, $g\in \Fmu(\Rd)$, and let $\lambda>0$ be a step size and $(x,v,z)\in\Rd\times\Rd\times\Rd$. The following equality holds
\begin{equation}\label{eq:pdgap-2}
\begin{aligned}
    \PDg_{\tfrac{\lambda}{1+\lambda\mu} g_\mu}(x,v-\mu x;\tfrac{z}{1+\lambda\mu})
    =& \;\tfrac{1}{2(1+\lambda\mu)^2}\normsq{x - z +\lambda v}\\
    &+ \tfrac{\lambda}{1+\lambda\mu}\left(g(x)-g(w)+\tfrac{\mu}{2}\normsq{x-w} - \langle x-w,v \rangle \right),
\end{aligned}
\end{equation}
with $g_\mu(\cdot) = g(\cdot)-\tfrac{\mu}{2}\|\cdot\|^2$ and $w \in \Rd$ satisfying $v-\mu x + \mu w \in \partial g (w)$ (i.e., $w \in \partial g_\mu^*(v-\mu x)$).
\end{lemm}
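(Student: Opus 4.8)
The plan is to prove the identity by computing both sides of \eqref{eq:pdgap-2} directly from the definitions and matching them, exploiting the Fenchel-conjugate relationship between $w$ and $v-\mu x$. First I would write out the left-hand side using the definition of the primal-dual gap \eqref{eq:PD}, namely $\PDg_{\tfrac{\lambda}{1+\lambda\mu}g_\mu}(x,v-\mu x;\tfrac{z}{1+\lambda\mu}) = \Phi_\mathrm{p}(x;\tfrac{z}{1+\lambda\mu}) - \Phi_\mathrm{d}(v-\mu x;\tfrac{z}{1+\lambda\mu})$, where here the proximal problems are associated to $g_\mu$ with step size $\tfrac{\lambda}{1+\lambda\mu}$. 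Substituting into \eqref{eq:P} and \eqref{eq:D} gives two groups of terms: a collection of squared-norm expressions in $x$, $z$, and $v-\mu x$, and the function-value terms $\tfrac{\lambda}{1+\lambda\mu}g_\mu(x)$ and $\tfrac{\lambda}{1+\lambda\mu}g_\mu^*(v-\mu x)$.

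Next I would handle the two kinds of terms separately. The squared-norm terms are pure algebra: after expanding $\tfrac{1}{2}\normsq{x - \tfrac{z}{1+\lambda\mu}}$, $\tfrac{1}{2}\normsq{\tfrac{z}{1+\lambda\mu} - \tfrac{\lambda}{1+\lambda\mu}(v-\mu x)}$, and $\tfrac{1}{2}\normsq{\tfrac{z}{1+\lambda\mu}}$ and collecting, I expect the quadratic part to consolidate into the single term $\tfrac{1}{2(1+\lambda\mu)^2}\normsq{x - z + \lambda v}$ appearing on the right; this is the step most prone to sign and bookkeeping errors but is otherwise mechanical. For the function-value terms, the key move is to eliminate the conjugate $g_\mu^*$. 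Since $w$ satisfies $v - \mu x + \mu w \in \partial g(w)$, equivalently $v - \mu x \in \partial g_\mu(w)$, i.e. $w \in \partial g_\mu^*(v - \mu x)$, the Fenchel--Young equality holds with equality: $g_\mu^*(v-\mu x) = \langle v - \mu x, w\rangle - g_\mu(w)$. Substituting this lets me replace $g_\mu^*(v-\mu x)$ entirely in terms of $g_\mu(w)$ and an inner product.

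Then I would translate back from $g_\mu$ to $g$ using $g_\mu(\cdot) = g(\cdot) - \tfrac{\mu}{2}\normsq{\cdot}$, so that $g_\mu(x) = g(x) - \tfrac{\mu}{2}\normsq{x}$ and $g_\mu(w) = g(w) - \tfrac{\mu}{2}\normsq{w}$. Combining the function-value contribution $\tfrac{\lambda}{1+\lambda\mu}\big(g_\mu(x) + g_\mu^*(v-\mu x)\big)$ with the Fenchel substitution gives $\tfrac{\lambda}{1+\lambda\mu}\big(g(x) - \tfrac{\mu}{2}\normsq{x} + \langle v-\mu x, w\rangle - g(w) + \tfrac{\mu}{2}\normsq{w}\big)$. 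I would then reorganize the $\mu$-weighted quadratic pieces $-\tfrac{\mu}{2}\normsq{x} + \tfrac{\mu}{2}\normsq{w} - \mu\langle x, w\rangle$ together with $\langle v, w\rangle$; completing the square shows that $-\tfrac{\mu}{2}\normsq{x} + \tfrac{\mu}{2}\normsq{w} - \mu\langle x,w\rangle + \mu\normsq{x}$-type terms reassemble into $\tfrac{\mu}{2}\normsq{x-w} - \langle x-w, v\rangle$ after accounting for the $\langle v, w\rangle$ and $-\mu\langle x, w\rangle$ contributions. Matching this against the bracketed expression on the right of \eqref{eq:pdgap-2} completes the proof.

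The main obstacle I anticipate is the careful reconciliation of the $\mu$-dependent terms: several cross-terms involving $\langle x, w\rangle$, $\langle v, w\rangle$, and the quadratics in $x$ and $w$ must cancel or recombine precisely into the clean form $\tfrac{\mu}{2}\normsq{x-w} - \langle x-w, v\rangle$, and it is easy to drop a factor of $\tfrac{\lambda}{1+\lambda\mu}$ or mishandle the $\mu x$ shift in the dual variable $v - \mu x$. A secondary point worth checking is that the quadratic-in-$z$ terms from the primal and dual indeed combine cleanly, since $z$ does not appear in the final bracketed expression except through $\normsq{x-z+\lambda v}$; verifying this consistency is a useful sanity check on the norm computation. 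Everything else is a direct, if tedious, substitution, and no inequality or duality gap is needed since the relation $w \in \partial g_\mu^*(v-\mu x)$ makes every Fenchel--Young step an equality.
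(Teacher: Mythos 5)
Your proposal is correct and follows essentially the same route as the paper's own proof: expand $\PDg_{\tfrac{\lambda}{1+\lambda\mu}g_\mu}$ directly from the definitions \eqref{eq:P}--\eqref{eq:D}, eliminate the conjugate via the Fenchel--Young equality $g_\mu^*(v-\mu x)=\langle w,v-\mu x\rangle-g_\mu(w)$ (valid exactly because $w\in\partial g_\mu^*(v-\mu x)$), and regroup the $\mu$-weighted terms into $\tfrac{\mu}{2}\normsq{x-w}-\langle x-w,v\rangle$. One minor bookkeeping note: the three squared norms consolidate into $\tfrac{1}{2(1+\lambda\mu)^2}\normsq{x-z+\lambda v}$ only together with the cross term $-\tfrac{\lambda}{1+\lambda\mu}\langle x,v-\mu x\rangle$, whose pieces $-\langle x,v\rangle$ and $+\mu\normsq{x}$ you do in fact absorb correctly in your final regrouping, so this is a looseness of phrasing rather than a gap.
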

\begin{proof}
    \begin{equation}\label{eq:pdgap-1}
\begin{aligned}
    \PDg_{\tfrac{\lambda}{1+\lambda\mu} g_\mu}(x,v-\mu x;\tfrac{z}{1+\lambda\mu}) =& \;\tfrac{1}{2}\normsq{x-\tfrac{z}{1+\lambda \mu}+\tfrac{\lambda}{1+\lambda\mu}(v-\mu x)}\\
    &+ \tfrac{\lambda}{1+\lambda\mu}\left(g_\mu(x) + g_\mu^*(v-\mu x) - \langle x,v-\mu x \rangle \right)\\
    =& \;\tfrac{1}{2(1+\lambda\mu)^2}\normsq{x - z +\lambda v}\\
    &+ \tfrac{\lambda}{1+\lambda\mu}\left(g(x)+\tfrac{\mu}{2}\normsq{x} + g_\mu^*(v-\mu x) - \langle x,v \rangle \right).
\end{aligned}
\end{equation}
In particular 
\begin{equation*}
    g_\mu^*(v-\mu x) = \max_y\,\{ \langle y,v-\mu x \rangle - g(y)+\tfrac{\mu}{2}\normsq{y}\},
\end{equation*}
and by choosing $w \in \Rd$ such that $v-\mu x+\mu w \in \partial g(w)$ we get
\begin{equation}
    g_\mu^*(v-\mu x) = \langle w,v-\mu x \rangle - g(w)+\tfrac{\mu}{2}\normsq{w}.
\end{equation}
Finally, using the expression of $g_\mu^*(v-\mu x)$ in \eqref{eq:pdgap-1} leads to the desired results.
\end{proof}

In the next section, we present an inexact accelerated forward-backward method where inexactness in proximal computations are measured using the primal-dual criterion from \cref{def:criterion}.
		
\section{An inexact accelerated forward-backward method}\label{s:accel_FB}
In this section, we provide the main contribution of this work, namely \Cref{eq:algo2}. This method aims at solving problem~\eqref{eq:opt} when the gradient of $f$ is readily available and the proximal operator of $g$ can be efficiently approximated within a target precision (e.g., by an iterative method). It further allows to exploit $g$ to be $\mu$-strongly convex. In the case where $f$ is strongly convex, one can shift this strong convexity to $g$ instead (by removing the corresponding quadratic of $f$ and adding it to $g$). Of course, any under-approximation of $\mu$ can be used within the method.

The worst-case analysis is based on a simple Lyapunov (or potential) argument, following the now standard template for accelerated schemes as in~\citet{Nesterov:1983wy}, for which surveys are provided in e.g.,~\citet{bansal2019potential,wilson2021lyapunov}, and~\cite[Chapter 4]{d2021acceleration}. As a byproduct of the analysis, the method does not require an accurate estimate of the smoothness constant $L$, whose estimation is improved on the fly using  standard backtracking tricks, similar in spirit with~\citet{Nesterov:1983wy,beck2009fast}.

The algorithm below builds on approximations of the forward-backward operator (with step sizes $\lambda_k$) of problem~\eqref{eq:opt}. More precisely, it relies on primal-dual pairs $(x_{k+1},\,v_{k+1})$ approximating the forward-backward operator evaluated at some iterates $y_k$, and satisfying 
\[ (x_{k+1},\;v_{k+1})\approx_{\varepsilon_k,\mu}\left(\prox_{\lambda_{k} g}(y_{k}-\lambda_k f'(y_k)),\prox_{\tfrac{g^*}{\lambda_{k}}}(\tfrac{y_{k}-\lambda_k f'(y_k)}{\lambda_k})\right), \]
where $\varepsilon_k$ encodes some approximation level. In this work, this error term is parameterized by three sequences of nonnegative scalars $\{\sigma_k\}_k$, $\{\zeta_k\}_k$, $\{\xi_k\}_k$ that can be chosen by the user for possibly mixing both \emph{relative} (or multiplicative) and \emph{absolute} (or additive) error terms
\[ \varepsilon_k =\tfrac{\sigma_k^2}{2(1+\lambda_k \mu)^2}\|x_{k+1}-y_k\|^2+\tfrac{\zeta_k^2\lambda_k^2}{2(1+\lambda_k \mu)^2}\|v_{k+1}+f'(y_k)\|^2+\tfrac{\lambda_k\xi_k}{2(1+\lambda_k \mu)^2},\]
where $\{\xi_k\}_k$ parameterizes the absolute error term, and where $\{\sigma_k\}_k$ and $\{\zeta_k\}_k$ parametrize two types of relative errors. Of course, convergence properties of the algorithm depend on the choice of those sequences of parameters, as provided in \cref{cor:speed_strcvx} and \cref{cor:speed_cvx} below. Examples of simple rules for $\{\sigma_k\}_k$, $\{\zeta_k\}_k$, $\{\xi_k\}_k$ are provided in \cref{s:numerics} (typically, $\{\sigma_k\}_k$, $\{\zeta_k\}_k$ can be chosen constant, whereas $\{\xi_k\}_k$ should be either identically $0$ or decreasing fast enough).

Before going into the algorithm itself, let us mention that the backtracking line-search strategy \eqref{eq:backtracking} for estimating the smoothness constant builds on the condition
\begin{equation}\label{eq:smoothness}
    f(y_{k})  \geq f(x_{k+1}) + \langle f'(x_{k+1}),y_{k} - x_{k+1}\rangle + \tfrac{\lambda_k}{2(1-\sigma_k^2)}\| f'(y_{k}) -  f'(x_{k+1})\|^2, \tag{Smooth}
\end{equation}
where $x_k$'s and $y_k$'s are some iterates. In particular, picking $\lambda_k \in (0,\tfrac{1-\sigma_k^2}{L}]$ (hence depending on the true smoothness constant $L$) guarantees~\eqref{eq:smoothness} to be satisfied without backtracking, as, when $f \in \mathcal{F}_{0,L}(\Rd)$, \cref{prop:smooth} holds.

\begin{rema}[Related methods]
When the objective function is not strongly convex (i.e. $\mu=0$), the update rules of \Cref{eq:algo2} are very similar to those of the accelerated inexact forward-backward methods from \cite[Algorithm 3]{millan2019inexact} (when $\zeta_k=0$ and $\xi_k=0$) or \cite[Algorithm 2]{Bello2020} (when $\sigma_k=0$ and $\xi_k=0$). Compared to those works in this setup, \Cref{eq:algo2} allows using both relative and absolute errors while having a backtracking strategy. Note also the similarities with some inexact FISTA \citep{villa2013accelerated,schmidt2011convergence}, although these methods do not re-use explicitly the dual direction $v_{k+1}$ and focus on absolute error terms (i.e., $\sigma_k=\zeta_k=0$). Finally, when the computation of the proximal operator is exact, we recover one of the many variants of an accelerated forward-backward method; see for example~\citet{nesterov2013gradient,tseng2008accelerated,beck2009fast}; we refer to~\cite[Chapter 4]{d2021acceleration} and the references therein for further discussions.\end{rema}

\subsection{ Algorithm}\label{subsec:FB}
\begin{oframed}
	\textbf{An inexact accelerated forward-backward method } (\Cref{eq:algo2}) \label[algorithm]{eq:algo2}
	\begin{itemize}
		\item[] {\bf{}Input:}
		\begin{itemize}
		    \item Objective function: $f\in\mathcal{F}_{0,L}(\Rd)$, $g\in\mathcal{F}_{\mu,\infty}(\Rd)$, and $\mu\geq 0$.
		    \item Initial point: $x_0\in\mathbb{R}^d$.
		    \item Initial step size: $\lambda_0 >0$.
		    \item Tolerance parameters: sequences $\{\sigma_k\}_k,\,\{\zeta_k\}_k$ with $\sigma_k,\,\zeta_k\in[0,1)$, and $\{\xi_k\}_k$ with $\xi_k\geq 0$.
		    \item Backtracking parameters $0<\alpha <1$ and $\beta \geq 1$.
		\end{itemize}
		\item[] {\bf{}Initialization:} $z_0=x_0$, $A_0 = 0$. 
		\item[] {\bf{}Run:} For $k=0,1,\hdots$:
		\begin{align}
		    \eta_k &= (1-\zeta_k^2)\lambda_k\label{eq:init}\\
			A_{k+1} &= A_k + \tfrac{\eta_k + 2 A_{k}\mu\eta_k +\sqrt{ \eta_k^2 + 4\eta_k A_{k}(1+\eta_k\mu)(1+A_{k}\mu)}}{2} \nonumber\\
			y_{k} &= x_{k}+\tfrac{(A_{k+1}-A_{k})(A_{k}\mu+1)}{A_{k+1}+A_k(2A_{k+1}-A_k)\mu }(z_{k}-x_{k}) \nonumber\\
			\varepsilon_k &=\tfrac{\sigma_k^2}{2(1+\lambda_k \mu)^2}\|x_{k+1}-y_k\|^2+\tfrac{\zeta_k^2\lambda_k^2}{2(1+\lambda_k \mu)^2}\|v_{k+1}+f'(y_k)\|^2+\tfrac{\lambda_k\xi_k}{2(1+\lambda_k \mu)^2} \nonumber\\
			(x_{k+1},\;v_{k+1})&\approx_{\varepsilon_k,\mu}\left(\prox_{\lambda_{k} g}(y_{k}-\lambda_k f'(y_k)),\prox_{\tfrac{g^*}{\lambda_{k}}}(\tfrac{y_{k}-\lambda_k f'(y_k)}{\lambda_k})\right)\nonumber\\
		 \mathbf{\large[}\text{If } &\eqref{eq:smoothness} \text{ is not satisfied, set }
		 \lambda_k \leftarrow \alpha \lambda_k \tag{Btr}\label{eq:backtracking} \text{ and go back to step~\eqref{eq:init}\large]}\nonumber\\
			z_{k+1} &= z_{k}+\tfrac{A_{k+1}-A_{k}}{1+\mu A_{k+1}}\left(\mu(x_{k+1}-z_{k}) - (v_{k+1} + f'(y_{k}))\right)\nonumber\\
			\lambda_{k+1} &= \beta \lambda_{k} \nonumber
			\end{align}
			\item[] {\bf{}Output:} $x_{k+1}$
	\end{itemize}  
\end{oframed}

The following theorem contains the main (Lyapunov-based) ingredient of the worst-case analysis.
\begin{theo}\label{thm:deacres-potential} 
Let $f\in\mathcal{F}_{0,L}(\Rd)$, $g \in \Fmu(\Rd)$, $F\equiv f+g$, $k\geq 0$, parameters $\sigma_k,\zeta_k \in [0,1)$, $\xi_k \geq 0$ and some $\lambda_k>0$ such that \eqref{eq:smoothness} is satisfied. For any $x_k,z_k\in\Rd$, and $A_k\geq0$, 
it holds that
\begin{equation}\label{eq:deacres-potential}
\begin{aligned}
A_{k+1}&(F(x_{k+1})-F(x_\star))+\tfrac{1+\mu A_{k+1}}{2}\normsq{z_{k+1}-x_\star}\leq A_k (F(x_k)-F(x_\star))+\tfrac{1+\mu A_k}{2}\normsq{z_k-x_\star}+ \tfrac{A_{k+1}}{2}\xi_k,
\end{aligned}
\end{equation}
with $x_\star\in\argmin_x F(x)$, and where $z_{k+1}$ and $x_{k+1}$ are constructed by one iteration of \Cref{eq:algo2}.
\end{theo}

The proof of this Theorem is deferred to \cref{subsec:proof-FB}. The following (classical) corollary establishes that the growth rate of the sequence $\{A_k\}_k$ drives the convergence rate of the worst-case guarantee.  Those factors $A_{k+1}$, controlling the convergence rate, were greedily chosen (as large as possible) while enforcing \eqref{eq:deacres-potential} to hold. 

\begin{coro}\label{cor:FB}
Let $f\in\mathcal{F}_{0,L}(\Rd)$, $g \in \Fmu(\Rd)$ and $
F\equiv f+g$. Let $x_0\in\Rd$, $\lambda_0$ be a positive initial step size, $\alpha\in(0,1)$ and $\beta\geq 1$ be some backtracking parameters, sequences (relative error parameters) $\{\sigma_k\}_k$, $\{\zeta_k\}_k$, satisfying $\sigma_k,\zeta_k\in[0,1)$ and a sequence (absolute error parameters) $\{\xi_k\}_k$ with $\xi_k\geq 0$. Let $x_N\in\Rd$ be the output after $N\in\N^*$ iterations of \Cref{eq:algo2} on $F$ initiated at $x_0\in\Rd$, it holds that
\[ F(x_N) - F(x_\star) \leq \tfrac{1}{2A_N}\normsq{x_0-x_\star} + \sum_{i=0}^{N-1}\tfrac{A_{i+1}}{2A_N}\xi_i,\]
where $x_\star \in \argmin_x F(x)$.
\end{coro}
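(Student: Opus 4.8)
The plan is to prove \Cref{cor:FB} by telescoping the one-step potential decrease supplied by \Cref{thm:deacres-potential}. I would first introduce the Lyapunov (or potential) function
\[\mathcal{L}_k = A_k(F(x_k)-F(x_\star)) + \tfrac{1+\mu A_k}{2}\normsq{z_k-x_\star},\]
so that the conclusion \eqref{eq:deacres-potential} of \Cref{thm:deacres-potential} reads compactly as $\mathcal{L}_{k+1}\leq \mathcal{L}_k + \tfrac{A_{k+1}}{2}\xi_k$. Before invoking this per-iteration inequality, I would check that its hypotheses hold along the whole run: the backtracking loop \eqref{eq:backtracking} guarantees that the step size $\lambda_k$ ultimately retained at iteration $k$ satisfies condition \eqref{eq:smoothness}, and this loop terminates because \Cref{prop:smooth} forces \eqref{eq:smoothness} to hold as soon as $\lambda_k\in(0,\tfrac{1-\sigma_k^2}{L}]$. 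Hence \Cref{thm:deacres-potential} applies at every $k=0,\dots,N-1$.

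Next I would sum $\mathcal{L}_{k+1}\leq \mathcal{L}_k + \tfrac{A_{k+1}}{2}\xi_k$ over $k=0,\dots,N-1$; the potentials telescope, yielding
\[\mathcal{L}_N \leq \mathcal{L}_0 + \sum_{k=0}^{N-1}\tfrac{A_{k+1}}{2}\xi_k.\]
I would then evaluate the two ends. At initialization the algorithm sets $A_0=0$ and $z_0=x_0$, so $\mathcal{L}_0 = \tfrac12\normsq{x_0-x_\star}$. For the left-hand side, since $\mu\geq 0$ and $A_N\geq 0$ the term $\tfrac{1+\mu A_N}{2}\normsq{z_N-x_\star}$ is nonnegative, so I would drop it to get the lower bound $\mathcal{L}_N\geq A_N(F(x_N)-F(x_\star))$. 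Combining these gives
\[A_N(F(x_N)-F(x_\star)) \leq \tfrac12\normsq{x_0-x_\star} + \sum_{k=0}^{N-1}\tfrac{A_{k+1}}{2}\xi_k.\]

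Finally I would divide by $A_N$, which requires $A_N>0$: this holds because the increment defining $A_{k+1}$ in \Cref{eq:algo2} is bounded below by $\eta_k=(1-\zeta_k^2)\lambda_k>0$ (the square root term alone is at least $\eta_k$, and the remaining summands are nonnegative), which is strictly positive since $\zeta_k\in[0,1)$ and $\lambda_k>0$; starting from $A_0=0$ this forces $A_N>0$ for every $N\in\N^*$. Dividing through produces exactly the claimed bound after renaming the summation index from $k$ to $i$. The argument is essentially routine telescoping: all of the genuine analytic content resides in \Cref{thm:deacres-potential}, and the only mild subtleties here are verifying $A_N>0$ so that the final division is legitimate, and the elementary remark that backtracking delivers step sizes satisfying \eqref{eq:smoothness} so that the theorem's hypothesis holds at each iteration.
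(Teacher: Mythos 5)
Your proof is correct and follows essentially the same route as the paper's: defining the potential $\Phi_k = A_k(F(x_k)-F(x_\star)) + \tfrac{1+\mu A_k}{2}\normsq{z_k-x_\star}$, telescoping the inequality from \cref{thm:deacres-potential}, and concluding via $A_0=0$, $z_0=x_0$, and $A_N(F(x_N)-F(x_\star))\leq \Phi_N$. Your additional verifications---that backtracking terminates so \eqref{eq:smoothness} holds at each step, and that $A_N>0$ (via the increment being at least $\eta_k>0$) so the final division is legitimate---are correct refinements the paper leaves implicit.
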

\begin{proof}
We denote by $\Phi_k$ the quantity (a.k.a., the Lyapunov/potential function)
\[\Phi_k = A_k(F(x_k)-F(x_\star)) + \tfrac{1+\mu A_k}{2}\normsq{z_k-x_\star}, \]
for $k \geq 0$. \cref{thm:deacres-potential} allows nesting the $\Phi_k$'s together as 
\[\Phi_N \leq \Phi_{N-1} + \tfrac{A_{N}}{2}\xi_{N-1} \leq \hdots \leq \Phi_1 + \sum_{i=1}^{N-1}\tfrac{A_{i+1}}{2}\xi_i\leq \Phi_0 +\sum_{i=0}^{N-1}\tfrac{A_{i+1}}{2}\xi_i.\]
We reach the target conclusion using $A_N(F(x_N)-F(x_\star)) \leq \Phi_N$, together with $z_0=x_0$ and $A_0 =0$.
\end{proof}

Let us note that when $\mu=0$, we recover a composite version of the A-HPE method~\citep{monteiro2013accelerated}. In that case, we can bound $A_k\geq\tfrac14\left(\sum_{i=0}^{k-1}\sqrt{\eta_k}\right)^2 \geq \tfrac{\eta_\mathrm{min}}{4}k^2$, assuming the existence of some $\eta_\mathrm{min}\leq \eta_k$ for all $k\geq 0$. Such a lower bound on $\eta_k$ exists as soon as the parameters $\{\sigma_k\}_k$, $\{\zeta_k\}_k$ are well chosen (see for example \cref{cor:speed_strcvx} and \cref{cor:speed_cvx} below), and due to the $L$-smoothness of the function. Similarly, when $\mu>0$, $A_k$'s are growing exponentially as
\begin{align*}
    A_{k+1} &= A_{k} + \tfrac{\eta_k+2A_k \eta_k \mu + \sqrt{4 \eta_k A_k (A_k \mu +1) (\eta_k \mu +1)+\eta_k^2}}{2}\\
    & \geq A_{k}(1 + \eta_k\mu) +A_k \sqrt{\eta_k\mu(1+\eta_k\mu)}\\
    & = A_{k}/\left(1-\sqrt{\tfrac{\eta_k  \mu}{1+\eta_k \mu}}\right),
\end{align*}
with $A_1 > 0$, reaching $1/A_k \leq \eta_{\mathrm{min}}\left(1-\sqrt{\tfrac{\eta_{\mathrm{min}}\mu}{1+\eta_{\mathrm{min}}\mu}}\right)^{k-1}$ assuming again the existence of some $\eta_\mathrm{min}\leq \eta_k$ for all $k\geq 0$. The following corollaries provide more precise convergence bounds for \Cref{eq:algo2}, by quantifying the growth rate of the $A_k$'s, for some particular choices of parameters $\{\sigma_k\}_k$~(constant), $\{\zeta_k\}_k$~(constant), and $\{\xi_k\}_k$ (parameterized function of $k$), linking the behavior of the decrease rate of the absolute errors $\xi_k$ with the convergence bound.
\begin{coro}\label{cor:speed_strcvx}
Let $f\in\mathcal{F}_{0,L}(\Rd)$, $g \in \Fmu(\Rd)$ and $F\equiv f+g$. Let $x_0\in\Rd$, $\lambda_0$ be an initial positive step size, $\alpha\in(0,1)$ and $\beta\geq 1$ be some backtracking parameters, sequences (relative error parameters) $\sigma_k = \sigma$, $\zeta_k =\zeta$ with $\sigma,\zeta\in[0,1)$ and a sequence (absolute error parameters) $\xi_k=C \rho^k$ with $C,\rho>0$. Let $x_N\in\Rd$ be the output after $N\in\N^*$ iterations of \Cref{eq:algo2} on $F$ initiated at $x_0\in\Rd$, it holds that
\[F(x_N) - F(x_\star)\leq \tfrac{1}{2\eta}\left(1-\sqrt{\tfrac{\eta\mu}{1+\eta\mu}}\right)^{N-1}\|x_0-x_\star\|^2+\left\{\begin{array}{ll}
   \medmath{\frac{C}{ 2(1-\sqrt{\tfrac{\eta\mu}{1+\eta\mu}}-\rho)}\left(1-\sqrt{\tfrac{\eta\mu}{1+\eta\mu}} \right)^{N}}\hspace{-.1cm}& \textrm{if } \medmath{\rho  < 1-\sqrt{\tfrac{\eta\mu}{1+\eta\mu}}}, \\
     \medmath{\frac12 C N \left(1-\sqrt{\tfrac{\eta\mu}{1+\eta\mu}} \right)^{N-1}} & \textrm{if } \medmath{\rho  = 1-\sqrt{\tfrac{\eta\mu}{1+\eta\mu}}}, \\
     \medmath{\frac{C}{2(\rho -  1+\sqrt{\tfrac{\eta\mu}{1+\eta\mu}})}\rho^N} &\textrm{if } \medmath{\rho  > 1-\sqrt{\tfrac{\eta\mu}{1+\eta\mu}}},\end{array} \right.\]
     for some $\eta = \underset{i=0,\hdots,N-1}{\min}\,\eta_i\geq \eta_{\mathrm{min}} = (1-\zeta^2)\min\left(\lambda_0, \tfrac{\alpha(1-\sigma^2)}{L}\right)$ and where $x_\star \in \argmin_x F(x)$.
\end{coro}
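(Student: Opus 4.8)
The plan is to feed two quantitative ingredients---a uniform lower bound on the quantities $\eta_k$ and the geometric growth of the $A_k$'s---into the guarantee of \cref{cor:FB}, namely $F(x_N)-F(x_\star)\leq \tfrac{1}{2A_N}\normsq{x_0-x_\star}+\sum_{i=0}^{N-1}\tfrac{A_{i+1}}{2A_N}\xi_i$, and then to evaluate the resulting finite geometric sum.

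First I would establish the lower bound $\eta_k\geq\eta_{\mathrm{min}}$. Since $\eta_k=(1-\zeta^2)\lambda_k$, it suffices to lower-bound the step sizes $\lambda_k$ produced by the backtracking loop \eqref{eq:backtracking}. By \cref{prop:smooth}, condition \eqref{eq:smoothness} is automatically met as soon as $\lambda_k\leq\tfrac{1-\sigma^2}{L}$; hence the value that triggers the last failed backtracking step exceeds $\tfrac{1-\sigma^2}{L}$, so the accepted step size is never below $\tfrac{\alpha(1-\sigma^2)}{L}$. Combining this with $\beta\geq 1$ (so step sizes only shrink through backtracking, never between iterations) and a short induction gives $\lambda_k\geq\min\!\left(\lambda_0,\tfrac{\alpha(1-\sigma^2)}{L}\right)$, whence $\eta_k\geq\eta_{\mathrm{min}}$ and in particular $\eta:=\min_{0\leq i\leq N-1}\eta_i\geq\eta_{\mathrm{min}}$.

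Next, set $q:=1-\sqrt{\tfrac{\eta\mu}{1+\eta\mu}}\in(0,1)$. The displayed computation preceding the corollary already gives $A_{k+1}\geq A_k/\bigl(1-\sqrt{\tfrac{\eta_k\mu}{1+\eta_k\mu}}\bigr)$; since $t\mapsto\sqrt{\tfrac{t\mu}{1+t\mu}}$ is increasing and $\eta_k\geq\eta$, this sharpens to the uniform recursion $A_{k+1}\geq A_k/q$. Evaluating the defining recursion at $k=0$ (with $A_0=0$) yields $A_1=\eta_0\geq\eta$, so iterating produces $A_N\geq\eta\,q^{-(N-1)}$ and, more generally, $A_N\geq A_{i+1}\,q^{-(N-1-i)}$ for every $i$. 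The first estimate bounds the initial term by $\tfrac{1}{2A_N}\normsq{x_0-x_\star}\leq\tfrac{1}{2\eta}q^{N-1}\normsq{x_0-x_\star}$, matching the leading term of the claim, while the second bounds the error ratios by $\tfrac{A_{i+1}}{A_N}\leq q^{N-1-i}$.

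It then remains to substitute $\xi_i=C\rho^i$ and $\tfrac{A_{i+1}}{A_N}\leq q^{N-1-i}$ into the sum of \cref{cor:FB}, giving $\sum_{i=0}^{N-1}\tfrac{A_{i+1}}{2A_N}\xi_i\leq\tfrac{C}{2}q^{N-1}\sum_{i=0}^{N-1}(\rho/q)^i$, and to evaluate this finite geometric series in the three regimes $\rho<q$, $\rho=q$, and $\rho>q$, using $\sum_{i=0}^{N-1}r^i\leq\tfrac{1}{1-r}$ for $r<1$, $=N$ for $r=1$, and $\leq\tfrac{r^N}{r-1}$ for $r>1$. In each regime the closed form reproduces the corresponding branch of the stated bound (after rewriting $q-\rho=1-\sqrt{\tfrac{\eta\mu}{1+\eta\mu}}-\rho$ and $\rho-q=\rho-1+\sqrt{\tfrac{\eta\mu}{1+\eta\mu}}$), and adding the two contributions yields the claim. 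I expect the main obstacle to be purely bookkeeping: making the backtracking lower bound on $\lambda_k$ fully rigorous, by tracking the interplay of the $\alpha$-shrinking inside an iteration and the $\beta$-growth between iterations, and matching the geometric-series constants exactly to the three displayed branches---the analytical heart, the geometric growth of $A_k$, being already supplied by the recursion.
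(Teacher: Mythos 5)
Your proof is correct and takes essentially the same route as the paper's: the paper likewise starts from \cref{cor:FB}, invokes the growth estimate derived just before the corollary in the form $A_k \le \bigl(1-\sqrt{\tfrac{\eta\mu}{1+\eta\mu}}\bigr)^{N-k} A_N$ with $\eta=\min_{i}\eta_i\ge\eta_{\mathrm{min}}$, and evaluates the resulting geometric sums in the three regimes $\rho<q$, $\rho=q$, $\rho>q$. The only difference is expository: you make explicit the backtracking lower bound $\lambda_k\ge\min\bigl(\lambda_0,\tfrac{\alpha(1-\sigma^2)}{L}\bigr)$ and the base case $A_1=\eta_0\ge\eta$, which the paper leaves implicit in the definition of $\eta_{\mathrm{min}}$.
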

\begin{proof} Starting from the conclusion of \cref{cor:FB}, we obtain the desired result using classical properties of geometric sums along with $A_k \leq \left(1-\sqrt{\tfrac{\eta\mu}{1+\eta\mu}}\right)^{N-k}A_N$ where $\eta = \underset{i=0,\hdots,N-1}{\min}\,\eta_i\geq\eta_{\mathrm{min}}$.
\end{proof}
When $\mu = 0$, the proof is still valid, and $\tfrac{1}{A_N}=O(N^{-2})$. In particular, we recover the same rates as those of~\cite[Theorem 4.4]{villa2013accelerated} (who used the particular choice $v_{k+1}=\tfrac{y_k-\lambda_k f'(y_k)-x_{k+1}}{\lambda_k}$).
\begin{coro}\label{cor:speed_cvx}
Let $f\in\mathcal{F}_{0,L}(\Rd)$, $g \in \Fmu(\Rd)$ and $F\equiv f+g$. Let $x_0\in\Rd$, $\lambda_0$ be an initial positive step size and sequences (relative error parameters) $\sigma_k = \sigma$, $\zeta_k =\zeta$ with $\sigma,\zeta\in[0,1)$. Let $x_N\in\Rd$ denote the output after $N\in\N^*$ iterations of \Cref{eq:algo2} on $F$ initiated at $x_0\in\Rd$.
\begin{itemize}
    \item We further let $\alpha\in(0,1)$ and $\beta = 1$ be the backtracking parameters, and a sequence (absolute error parameters) $\xi_k=C (k+1)^{-q}$ with $C,q\geq 0$. It holds that 
    \[F(x_N) - F(x_\star)\leq \tfrac{2}{\eta_{\mathrm{min}} N^2}\|x_0-x_\star\|^2+ \left\{\begin{array}{ll}
     2C \tfrac{\eta_{\mathrm{max}}}{\eta_{\mathrm{min}}}\tfrac{\left(\sum_{k=0}^\infty (k+1)^{2-q}\right)}{N^2} & \textrm{if } q > 3,\\
     2C\tfrac{\eta_{\mathrm{max}}}{\eta_{\mathrm{min}}}\tfrac{(1+\ln(N))}{N^2} & \textrm{if } q = 3, \\
    2C\tfrac{\eta_{\mathrm{max}}}{\eta_{\mathrm{min}}}\left(\tfrac{1}{N^2} + \tfrac{1}{(3-q)N^{q-1}} \right) & \textrm{if } 1 < q < 3,\end{array} \right. \]
    with $\eta_{\mathrm{min}} = (1-\zeta^2)\min(\lambda_0,\tfrac{\alpha(1-\sigma^2)}{L})$, $\eta_{\mathrm{max}} = (1-\zeta^2)\max(\lambda_0,\tfrac{(1-\sigma^2)}{L})$ and $x_\star\in\argmin_x F(x)$.
    \item We further let $\alpha\in(0,1)$ and $\beta \geq 1$ be the backtracking parameters, and a sequence $\xi_k=0$ (no absolute error). It holds that
    \[F(x_N) - F(x_\star)\leq \tfrac{2}{\eta_{\mathrm{min}} N^2}\|x_0-x_\star\|^2,\]
    where $\eta_{\mathrm{min}} = (1-\zeta^2)\min(\lambda_0,\tfrac{\alpha(1-\sigma^2)}{L})$ and $x_\star\in\argmin_x F(x)$.
\end{itemize}
\end{coro}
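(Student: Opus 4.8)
The plan is to specialize the general guarantee of \cref{cor:FB} to the present case $\mu=0$, where it reads
\[F(x_N)-F(x_\star)\leq \tfrac{1}{2A_N}\normsq{x_0-x_\star}+\sum_{i=0}^{N-1}\tfrac{A_{i+1}}{2A_N}\xi_i,\]
and then to control the two ingredients that appear: a lower bound on $A_N$ (governing the first term and the denominators of the sum) and an upper bound on each $A_{i+1}$ (governing the numerators). Everything else is bookkeeping about the step sizes.

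First I would establish a two-sided estimate on the growth of $\{A_k\}_k$. Setting $\mu=0$ in the update for $A_{k+1}$ leaves $A_{k+1}=A_k+\tfrac{\eta_k+\sqrt{\eta_k^2+4\eta_k A_k}}{2}$, and squaring $2(A_{k+1}-A_k)-\eta_k=\sqrt{\eta_k^2+4\eta_k A_k}$ yields the clean identity $(A_{k+1}-A_k)^2=\eta_k A_{k+1}$. Writing $\sqrt{A_{k+1}}-\sqrt{A_k}=\tfrac{\sqrt{\eta_k A_{k+1}}}{\sqrt{A_{k+1}}+\sqrt{A_k}}$ and bounding the denominator between $\sqrt{A_{k+1}}$ and $2\sqrt{A_{k+1}}$ gives $\tfrac{\sqrt{\eta_k}}{2}\leq\sqrt{A_{k+1}}-\sqrt{A_k}\leq\sqrt{\eta_k}$. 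Telescoping from $A_0=0$ then produces $A_N\geq\tfrac{\eta_{\mathrm{min}}}{4}N^2$ and $A_{i+1}\leq(i+1)^2\eta_{\mathrm{max}}$.

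Next I would justify the explicit constants $\eta_{\mathrm{min}}$ and $\eta_{\mathrm{max}}$ through the backtracking mechanism. Since $f\in\mathcal{F}_{0,L}(\Rd)$, \cref{prop:smooth} guarantees that \eqref{eq:smoothness} holds as soon as $\lambda_k\leq\tfrac{1-\sigma^2}{L}$; hence any \emph{rejected} trial step must exceed $\tfrac{1-\sigma^2}{L}$, so the accepted step after at least one backtrack satisfies $\lambda_k\geq\alpha\tfrac{1-\sigma^2}{L}$. For the first bullet, $\beta=1$ forces the trial steps to be non-increasing in $k$ (each starts at the previously accepted value), so a one-line induction gives $\min(\lambda_0,\tfrac{\alpha(1-\sigma^2)}{L})\leq\lambda_k\leq\lambda_0$; multiplying by $1-\zeta^2$ yields $\eta_{\mathrm{min}}\leq\eta_k\leq\eta_{\mathrm{max}}$. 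For the second bullet $\beta\geq1$ is harmless: the lower bound $\eta_k\geq\eta_{\mathrm{min}}$, and therefore $A_N\geq\tfrac{\eta_{\mathrm{min}}}{4}N^2$, does not rely on the steps being non-increasing, and no upper bound $\eta_{\mathrm{max}}$ is needed since the error sum is absent.

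Finally I would substitute these estimates into \cref{cor:FB}. The first term is immediately at most $\tfrac{2}{\eta_{\mathrm{min}}N^2}\normsq{x_0-x_\star}$, which already settles the second bullet ($\xi_k\equiv0$). For the first bullet, $\tfrac{A_{i+1}}{A_N}\leq\tfrac{4\eta_{\mathrm{max}}}{\eta_{\mathrm{min}}}\tfrac{(i+1)^2}{N^2}$ and $\xi_i=C(i+1)^{-q}$ reduce the error sum to $\tfrac{2C\eta_{\mathrm{max}}}{\eta_{\mathrm{min}}N^2}\sum_{i=0}^{N-1}(i+1)^{2-q}$, and the three regimes follow from standard integral comparisons for $\sum_{j=1}^{N}j^{2-q}$: convergence of the series when $2-q<-1$ (i.e. $q>3$), the harmonic bound $1+\ln N$ when $2-q=-1$ (i.e. $q=3$), and the partial-sum bound $1+\tfrac{N^{3-q}}{3-q}$ when $-1<2-q<1$ (i.e. $1<q<3$). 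I expect the only delicate point to be this case analysis on $q$: one must track the monotonicity of $j\mapsto j^{2-q}$ so that the integral estimates line up exactly with the stated closed forms, the rest being the elementary telescoping and backtracking arguments above.
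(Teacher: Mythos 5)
Your route is exactly the one the paper takes: its proof of this corollary is a one-line invocation of the bounds $\tfrac{\eta_{\mathrm{min}}}{4}k^2\leq A_k\leq \eta_{\mathrm{max}}k^2$ together with sum--integral comparisons, and your derivation fleshes this out correctly. The identity $(A_{k+1}-A_k)^2=\eta_k A_{k+1}$ obtained by squaring the $\mu=0$ recursion, the telescoping of $\sqrt{A_{k+1}}-\sqrt{A_k}$ between $\tfrac{\sqrt{\eta_k}}{2}$ and $\sqrt{\eta_k}$, and the backtracking bookkeeping (any rejected trial step must exceed $\tfrac{1-\sigma^2}{L}$ by \cref{prop:smooth}, hence accepted steps satisfy $\lambda_k\geq\min(\lambda_0,\tfrac{\alpha(1-\sigma^2)}{L})$, with the upper bound $\lambda_k\leq\lambda_0$ requiring $\beta=1$ while the lower bound survives $\beta\geq1$) are all sound, and indeed more complete than what the paper writes down.

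There is, however, one genuine snag, precisely at the point you yourself flagged as delicate. Your final step asserts the partial-sum bound $\sum_{j=1}^{N}j^{2-q}\leq 1+\tfrac{N^{3-q}}{3-q}$ for the whole range $1<q<3$, but this comparison is the one valid for a \emph{decreasing} integrand, i.e., only for $2\leq q<3$. For $1<q<2$ the map $j\mapsto j^{2-q}$ is increasing and the bound fails: with $q=\tfrac32$ and $N=9$ one has $\sum_{j=1}^{9}\sqrt{j}\approx 19.31 > 19 = 1+\tfrac{9^{3/2}}{3/2}$. The correct comparison in the increasing case is $\sum_{j=1}^{N}j^{2-q}\leq \int_1^{N+1}x^{2-q}\,dx=\tfrac{(N+1)^{3-q}-1}{3-q}$, which carries an extra lower-order term of order $N^{2-q}$ relative to the claimed closed form, so the stated constant in the regime $1<q<2$ is not reached by this route (one must either inflate the constant or restate the bound with $(N+1)^{3-q}$). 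To be fair, the paper's own proof is equally terse on this point and suffers from the same issue, so your proposal faithfully reproduces the paper's argument, including this loose subcase; everything else, including the second bullet ($\xi_k\equiv 0$, $\beta\geq 1$), is correct as you wrote it.
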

\begin{proof}
Starting from the conclusion of \cref{cor:FB}, we obtain the desired result in the case $\beta =1$ using comparisons of sums with integrals along with the bounds $\tfrac{\eta_{\mathrm{min}}}{4} k^2 \leq A_k \leq \eta_{\mathrm{max}} k^2 $. In the second case, where $\beta\geq 1$ and $\xi_k = 0$, the target result follows from $\tfrac{\eta_{\mathrm{min}}}{4} k^2 \leq A_k$.
\end{proof}

\subsection{ Proof of \cref{thm:deacres-potential}}\label{subsec:proof-FB}

The following proof is presented in a purely algebraic form consisting in a weighted sum of inequalities satisfied by the functions $f$ and $g$ as well as inexactness requirements. Indeed, it has been obtained from a dual certificate of a performance estimation problem (see \cite[Section 3]{barre2020principled_new} for more details on performance estimation in the context of inexact proximal operations). As mentioned in \Cref{sec:intro}, the algebraic equivalences stated below can be verified either by hand or with help of Mathematica notebooks (see \Cref{sec:intro}, \S ``Codes'').
\begin{proof}
    Let $w_{k+1} \in \Rd$ such that $v_{k+1}-\mu x_{k+1} + \mu w_{k+1} \in \partial g(w_{k+1})$. Using \eqref{eq:pdgap-2}, this leads to
    \begin{align*}
        \PDg_{\tfrac{\lambda_{k}}{1+\mu\lambda_{k}} \left(g(\cdot)-\tfrac{\mu}{2}\|\cdot\|^2 \right)}(x_{k+1},v_{k+1}-\mu x_{k+1};\tfrac{y_k-\lambda_kf'(y_k)}{1+\mu\lambda_{k}}) =& \;\tfrac{1}{2(1+\lambda_k\mu)^2}\normsq{x_{k+1} - y_k +\lambda_k(v_{k+1} +f'(y_k))}\\
    &+ \tfrac{\lambda_k}{1+\lambda_k\mu}\Big(g(x_{k+1})-g(w_{k+1}) \\
    &+\tfrac{\mu}{2}\normsq{x_{k+1}-w_{k+1}} - \langle x_{k+1}-w_{k+1},v_{k+1} \rangle \Big).
    \end{align*} 
    The proof consists in performing a weighted sum of the following inequalities:
     \begin{itemize}
        \item strong convexity of $g$ between $w_{k+1}$ and $x_\star$ with weight $\nu_1 = A_{k+1}-A_k$
        \[g(x_\star)\geq g(w_{k+1}) + \langle v_{k+1}-\mu x_{k+1}+\mu w_{k+1}, x_\star-w_{k+1}\rangle + \tfrac{\mu}{2}\normsq{w_{k+1}-x_\star},\]
        \item strong convexity of $g$ between $w_{k+1}$ and $x_{k}$ with weight $\nu_2 = A_k$
        \[g(x_k)\geq g(w_{k+1}) + \langle v_{k+1}-\mu x_{k+1}+\mu w_{k+1}, x_k-w_{k+1}\rangle + \tfrac{\mu}{2}\normsq{w_{k+1}-x_{k}},\]
        \item strong convexity of $g$ between $w_{k+1}$ and $x_{k+1}$ with weight $\nu_3 = A_{k+1}\lambda_k\mu$
        \[g(x_{k+1})\geq g(w_{k+1}) + \langle v_{k+1}-\mu x_{k+1}+\mu w_{k+1}, x_{k+1}-w_{k+1}\rangle + \tfrac{\mu}{2}\normsq{w_{k+1}-x_{k+1}},\]
        \item convexity of $f$ between $y_k$ and $x_\star$ with weight $\nu_4 = A_{k+1}-A_k$
        \[f(x_\star)\geq f(y_{k}) + \langle f'(y_k), x_\star-y_{k}\rangle ,\]
        \item convexity of $f$ between $y_k$ and $x_k$ with weight $\nu_5 = A_k$
        \[f(x_k)\geq f(y_{k}) + \langle f'(y_k), x_k-y_{k}\rangle ,\]
        \item convexity and $\tfrac{1-\sigma_k^2}{\lambda_k}$-smoothness of $f$ between $x_{k+1}$ and $y_k$ required by \eqref{eq:smoothness} with weight $\nu_6=A_{k+1}$
        \[f(y_k)\geq f(x_{k+1}) + \langle f'(x_{k+1}), y_k-x_{k+1}\rangle  + \tfrac{\lambda_k}{2(1-\sigma_k^2)}\normsq{f'(y_k) - f'(x_{k+1})},\]
        \item approximation requirement on $x_{k+1}$ with weight $\nu_7 = \tfrac{A_{k+1}}{\lambda_k}$
        \[\begin{aligned}
        \tfrac{\sigma_k^2}{2}\normsq{x_{k+1}-y_k} +\tfrac{\zeta_k^2\lambda_k^2}{2}\|v_{k+1}
		 +f'(y_k)\|^2 +\tfrac{\lambda_k}{2}\xi_k\geq&\,\lambda_k(1+\lambda_k\mu)\Big(g(x_{k+1})-g(w_{k+1})\\
		 &+\tfrac{\mu}{2}\normsq{x_{k+1}-w_{k+1}}
         - \langle x_{k+1}-w_{k+1},v_{k+1} \rangle \Big)\\
         &+\tfrac{1}{2}\normsq{x_{k+1} - y_k +\lambda_k(v_{k+1}+f'(y_k))}.
        \end{aligned}\]
    \end{itemize}
    The weighted sum can be written as
    \begin{equation}\label{eq:wsum}
    \hspace{-0.1cm}\begin{aligned}
        0 \geq&\, \nu_1\left[  g(w_{k+1})-g(x_\star) + \langle v_{k+1}-\mu x_{k+1}+\mu w_{k+1}, x_\star-w_{k+1}\rangle + \tfrac{\mu}{2}\normsq{w_{k+1}-x_\star}\right]\\
        &+ \nu_2\left[g(w_{k+1}) -g(x_k)+ \langle v_{k+1}-\mu x_{k+1}+\mu w_{k+1}, x_k-w_{k+1}\rangle + \tfrac{\mu}{2}\normsq{w_{k+1}-x_{k}} \right]\\
        &+\nu_3\left[ g(w_{k+1})-g(x_{k+1}) + \langle v_{k+1}-\mu x_{k+1}+\mu w_{k+1}, x_{k+1}-w_{k+1}\rangle + \tfrac{\mu}{2}\normsq{w_{k+1}-x_{k+1}}\right]\\
        &+\nu_4 \left[ f(y_{k})-f(x_\star) + \langle f'(y_k), x_\star-y_{k}\rangle\right]+\nu_5\left[  f(y_{k})-f(x_k) + \langle f'(y_k), x_k-y_{k}\rangle \right]\\
        &+\nu_6\left[ f(x_{k+1}) -f(y_k)+ \langle f'(x_{k+1}), y_k-x_{k+1}\rangle  + \tfrac{\lambda_k}{2(1-\sigma_k^2)}\normsq{f'(y_k) - f'(x_{k+1})}\right]\\
        &+\nu_7\left[ \lambda_k(1+\lambda_k\mu)\Big(g(x_{k+1})-g(w_{k+1})
		 +\tfrac{\mu}{2}\normsq{x_{k+1}-w_{k+1}}
         - \langle x_{k+1}-w_{k+1},v_{k+1} \rangle \Big)\right.\\
         &\left.\quad\quad+\tfrac{1}{2}\normsq{x_{k+1} - y_k +\lambda_k(v_{k+1}+f'(y_k))} -  \tfrac{\sigma_k^2}{2}\normsq{x_{k+1}-y_k} -\tfrac{\zeta_k^2\lambda_k^2}{2}\|v_{k+1}
		 +f'(y_k)\|^2 -\tfrac{\lambda_k}{2}\xi_k\right].
    \end{aligned}
    \end{equation}
    Substituting $y_k$ and $z_{k+1}$ in the weighted sum, that is
    \begin{equation*}
        \begin{array}{ccc}
             y_k &=& x_k+\tfrac{(A_{k+1}-A_k)(1+\mu A_k)}{A_{k+1}+\mu A_k(2 A_{k+1}-A_k)}(z_k-x_{k})  \\
             z_{k+1} &=& z_{k}+\tfrac{A_{k+1}-A_k}{1+\mu A_{k+1}}\left(\mu(x_{k+1}-z_k) -(v_{k+1}+f'(y_k))\right),
        \end{array}
    \end{equation*}
    \eqref{eq:wsum} is equivalently reformulated as
    \begin{align*}
        &A_{k+1}(F(x_{k+1}) - F(x_\star)) + \tfrac{1+\mu A_{k+1}}{2}\normsq{z_{k+1}-x_\star} \\
        &\leq A_{k}(F(x_{k}) - F(x_\star)) + \tfrac{1+\mu A_{k}}{2}\normsq{z_{k}-x_\star} +\tfrac{A_{k+1}}{2}\xi_k\\
        &\,\,\,\,\,- \tfrac{A_k(A_{k+1}-A_k)\mu (1 + A_k \mu)}{2(A_{k+1} + A_k(2A_{k+1}-A_k)\mu)}\normsq{x_k-z_k}\\
        &\,\,\,\,\,- \tfrac{A_{k+1}\lambda_k}{2(1-\sigma_k^2)}\normsq{f'(x_{k+1})-f'(y_k)+(1-\sigma_k^2)\tfrac{y_k-x_{k+1}}{\lambda_k}}\\
        &\,\,\,\,\,- \tfrac{\mu\left(A_{k+1}+A_k(2A_{k+1}-A_k)\mu \right)}{2(1+A_{k+1}\mu)}\normsq{x_{k+1}-y_k + \tfrac{(A_{k+1}-A_k)^2}{A_{k+1}+A_k(2A_{k+1}-A_k)\mu}(v_{k+1}+f'(y_k))}\\
        &\,\,\,\,\,+ A_{k+1}\tfrac{A_{k+1}(A_{k+1}-\eta_k)-A_k(1+\eta_k\mu)(2A_{k+1}-A_k)}{2(A_{k+1}+A_k(2A_{k+1}-A_k)\mu)}\normsq{v_{k+1}+f'(y_k)}\\
        &\leq A_{k}(F(x_{k}) - F(x_\star)) + \tfrac{1+\mu A_{k}}{2}\normsq{z_{k}-x_\star}+\tfrac{A_{k+1}}{2}\xi_k\\
        &\,\,\,\,\,+ A_{k+1}\tfrac{A_{k+1}(A_{k+1}-\eta_k)-A_k(1+\eta_k\mu)(2A_{k+1}-A_k)}{2(A_{k+1}+A_k(2A_{k+1}-A_k)\mu)}\normsq{v_{k+1}+f'(y_k)}\\
        &=A_{k}(F(x_{k}) - F(x_\star)) + \tfrac{1+\mu A_{k}}{2}\normsq{z_{k}-x_\star}+\tfrac{A_{k+1}}{2}\xi_k,
    \end{align*}
    where the inequality in the second to last line comes from the fact that factors in front of three squared Euclidean norms are nonpositive. In addition, the last equality follows from the particular choice of $A_{k+1}$ satisfies
    \[A_{k+1}(A_{k+1}-\eta_k) - A_k(1+\eta_k\mu)(2A_{k+1}-A_k) = 0,\]
    which implies that the factors in front of the last squared Euclidean norm vanishes.
\end{proof}

\section{Numerical examples}\label{s:numerics}

In this section, we present a few numerical experiments illustrating the behavior of the accelerated inexact forward backward method (\Cref{eq:algo2}) on two convex problems. More precisely, we applied the method to a factorization problem and to a total variation problem.

In both cases, we use a Tikhonov regularization, improving the conditioning and rendering the problems strongly convex, and illustrate the numerical performances of the algorithm with different tunings, including in the purely relative ($\xi_k=0$) and absolute accuracy ($\sigma_k=\zeta_k=0$) setups, as well as the influence of the knowledge of strong convexity parameter.

\subsection{ Factorization problem} Our first numerical experiment is a CUR-like factorization problem, introduced in~\citet{mairal2011convex}. It consists, given a matrix $W \in \mathrm{R}^{m\times p}$, in solving the minimization problem
\[ \min_X\,F(X) \equiv \underbrace{\tfrac{1}{2}\|W-WXW\|^2_{F}}_{f(X)} + \underbrace{\lambda_{\mathrm{row}}\sum_{i=1}^{n_r}\|X^i\|_2 + \lambda_{\mathrm{col}}\sum_{j=1}^{n_c}\|X_j\|_2 + \tfrac{\mu_\mathrm{reg}}{2}\|X\|^2_F}_{g(X)},  \]
where $\|\cdot\|_{F}$ is the Frobenius norm, and where $X^i$ and $X_j$ respectively denote the $i$th row and the $j$th column of the matrix $X$. This problem has already been used in~\citet{schmidt2011convergence} for illustrating convergence guarantees of an inexact accelerated proximal gradient method with absolute errors. As in~\citet{schmidt2011convergence}, we use an inexact version of the proximal operator of the regularization part, which we solve via a dual block coordinate ascent method~\citep{jenatton2010proximal} (i.e., we solve the dual of the proximal problem). Our implementation (see link in \S Codes from Section~\ref{sec:intro}) is based on that of~\citet{schmidt2011convergence}, and our experiments are done on the ``a1a'' dataset from the LIBSVM library \citep{CC01a}. The corresponding matrix $W$ is normalized for having zero mean and unit norm. We also impose $\lambda_{\text{col}} = \sqrt{\tfrac{p}{m}}\lambda_{\text{row}}$ for having a similar scaling for the row and column regularization parameters. The choice of the error criteria and regularization parameters is detailed in \cref{fig:CUR} where we plot gaps between the objective function values at the iterates of \Cref{eq:algo2} and the optimal objective value versus the number of iteration of \Cref{eq:algo2} (left) and versus the total number of dual block coordinate ascent iterations (right).

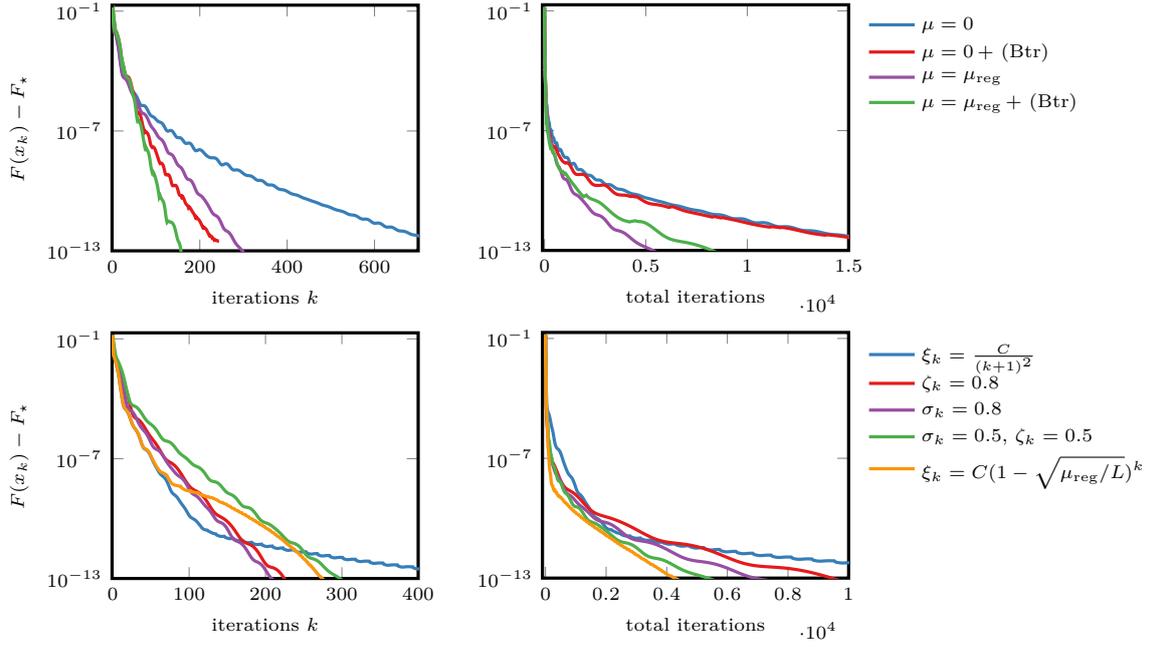
\begin{figure}[!ht]
\centering
\begin{tabular}{ll}
\begin{tikzpicture}
			\begin{semilogyaxis}[legend pos=outer north east, legend style={draw=none},legend cell align={left},plotOptions, ymin=1e-13, ymax=2e-1,xmin=-1,xmax=700,width=.33\linewidth, ylabel={$F(x_{k})-F_\star$}, xlabel={iterations $k$}]
			\addplot [colorP1] table [x=N,y=f,col sep=space] {data/CUR/CUR_a1a_L_1.331492e-01_mu_2.662985e-04_sigma_8.000000e-01_zeta_0.000000e+00_xi_zero_maxiter_20000_inneriter_20000_no_mu.txt};
			\addplot [colorP2] table [x=N,y=f,col sep=space] {data/CUR/CUR_a1a_L_1.331492e-01_mu_2.662985e-04_sigma_8.000000e-01_zeta_0.000000e+00_xi_zero_maxiter_20000_inneriter_20000_no_mu_backtrack.txt};
			\addplot [colorP3] table [x=N,y=f,col sep=space] {data/CUR/CUR_a1a_L_1.331492e-01_mu_2.662985e-04_sigma_8.000000e-01_zeta_0.000000e+00_xi_zero_maxiter_20000_inneriter_20000.txt};
			\addplot [colorP4] table [x=N,y=f,col sep=space] {data/CUR/CUR_a1a_L_1.331492e-01_mu_2.662985e-04_sigma_8.000000e-01_zeta_0.000000e+00_xi_zero_maxiter_20000_inneriter_20000_backtrack.txt};
			\end{semilogyaxis}
\end{tikzpicture}
    & \begin{tikzpicture}
			\begin{semilogyaxis}[legend pos=outer north east, legend style={draw=none},legend cell align={left},plotOptions, ymin=1e-13, ymax=2e-1,xmin=-100,xmax=15000,width=.33\linewidth, xlabel={total iterations}]
			
			\addplot [colorP1] table [x=cs,y=f,col sep=space] {data/CUR/CUR_a1a_L_1.331492e-01_mu_2.662985e-04_sigma_8.000000e-01_zeta_0.000000e+00_xi_zero_maxiter_20000_inneriter_20000_no_mu.txt};
			\addplot [colorP2] table [x=cs,y=f,col sep=space] {data/CUR/CUR_a1a_L_1.331492e-01_mu_2.662985e-04_sigma_8.000000e-01_zeta_0.000000e+00_xi_zero_maxiter_20000_inneriter_20000_no_mu_backtrack.txt};
			\addplot [colorP3] table [x=cs,y=f,col sep=space] {data/CUR/CUR_a1a_L_1.331492e-01_mu_2.662985e-04_sigma_8.000000e-01_zeta_0.000000e+00_xi_zero_maxiter_20000_inneriter_20000.txt};
			\addplot [colorP4] table [x=cs,y=f,col sep=space] {data/CUR/CUR_a1a_L_1.331492e-01_mu_2.662985e-04_sigma_8.000000e-01_zeta_0.000000e+00_xi_zero_maxiter_20000_inneriter_20000_backtrack.txt};
			\addlegendentry{$\mu = 0$}
			\addlegendentry{$\mu = 0 \,+ $ (Btr)}
			\addlegendentry{$\mu = \mu_{\mathrm{reg}} $}
			\addlegendentry{$\mu = \mu_{\mathrm{reg}} \,+ $ (Btr)}
			\end{semilogyaxis}
\end{tikzpicture}
\\
\begin{tikzpicture}
			\begin{semilogyaxis}[legend pos=outer north east, legend style={draw=none},legend cell align={left},plotOptions, ymin=1e-13,ymax=2e-1,xmin=-1,xmax=400,width=.33\linewidth, ylabel={$F(x_{k})-F_\star$}, xlabel={iterations $k$}]
			
			\addplot [colorP1] table [x=N,y=f,col sep=space] {data/CUR/CUR_a1a_L_1.331492e-01_mu_2.662985e-04_sigma_0.000000e+00_zeta_0.000000e+00_xi_poly2_maxiter_20000_inneriter_20000.txt};
			\addplot [colorP2] table [x=N,y=f,col sep=space] {data/CUR/CUR_a1a_L_1.331492e-01_mu_2.662985e-04_sigma_0.000000e+00_zeta_8.000000e-01_xi_zero_maxiter_20000_inneriter_20000.txt};
			\addplot [colorP3] table [x=N,y=f,col sep=space] {data/CUR/CUR_a1a_L_1.331492e-01_mu_2.662985e-04_sigma_5.000000e-01_zeta_5.000000e-01_xi_zero_maxiter_20000_inneriter_20000.txt};
			\addplot [colorP4] table [x=N,y=f,col sep=space] {data/CUR/CUR_a1a_L_1.331492e-01_mu_2.662985e-04_sigma_8.000000e-01_zeta_0.000000e+00_xi_zero_maxiter_20000_inneriter_20000.txt};
			\addplot [colorP5] table [x=N,y=f,col sep=space] {data/CUR/CUR_a1a_L_1.331492e-01_mu_2.662985e-04_sigma_0.000000e+00_zeta_0.000000e+00_xi_lin_maxiter_20000_inneriter_20000.txt};
			\end{semilogyaxis}
\end{tikzpicture}
    & \begin{tikzpicture}
			\begin{semilogyaxis}[legend pos=outer north east, legend style={draw=none},legend cell align={left},plotOptions, ymin=1e-13, ymax=2e-1,xmin=-100,xmax=10000,width=.33\linewidth, xlabel={total iterations}]
			
			\addplot[color=colorP1] table [x=cs,y=f,col sep=space] {data/CUR/CUR_a1a_L_1.331492e-01_mu_2.662985e-04_sigma_0.000000e+00_zeta_0.000000e+00_xi_poly2_maxiter_20000_inneriter_20000.txt};
			\addplot [colorP2] table [x=cs,y=f,col sep=space] {data/CUR/CUR_a1a_L_1.331492e-01_mu_2.662985e-04_sigma_0.000000e+00_zeta_8.000000e-01_xi_zero_maxiter_20000_inneriter_20000.txt};
			\addplot [colorP3] table [x=cs,y=f,col sep=space] {data/CUR/CUR_a1a_L_1.331492e-01_mu_2.662985e-04_sigma_5.000000e-01_zeta_5.000000e-01_xi_zero_maxiter_20000_inneriter_20000.txt};
			\addplot [colorP4] table [x=cs,y=f,col sep=space] {data/CUR/CUR_a1a_L_1.331492e-01_mu_2.662985e-04_sigma_8.000000e-01_zeta_0.000000e+00_xi_zero_maxiter_20000_inneriter_20000.txt};
			\addplot [colorP5] table [x=cs,y=f,col sep=space] {data/CUR/CUR_a1a_L_1.331492e-01_mu_2.662985e-04_sigma_0.000000e+00_zeta_0.000000e+00_xi_lin_maxiter_20000_inneriter_20000.txt};
			\addlegendentry{$\xi_k = \tfrac{C}{(k+1)^2}$}
			\addlegendentry{$\zeta_k = 0.8$}
			\addlegendentry{$\sigma_k = 0.8$}
			\addlegendentry{$\sigma_k = 0.5$, $\zeta_k=0.5$}
			\addlegendentry{$\xi_k = C(1-\sqrt{\mu_{\mathrm{reg}}/L})^{k}$}
			\end{semilogyaxis}
\end{tikzpicture}

\end{tabular}
		\caption{\Cref{eq:algo2} on CUR factorization. The initial step size is set to $\lambda_0 = \tfrac{1-\sigma_0^2}{L}$, initial $L$ to $\norm{W}^4$, $\lambda_\mathrm{row} = \lambda_\mathrm{col}\sqrt{m/p} = 2.10^{-3}$ ($\sim 30\%$ nonzero coefficients in the solution) and $\mu_{\mathrm{reg}} = 2.10^{-3}L$. Top: $\sigma_k = 0.8$ , $\zeta_k = 0$ and $\xi_k = 0$. Bottom: accelerated inexact forward-backward with $\mu=\mu_{\mathrm{reg}}$ and no backtracking.  When backtracking is used, $\alpha$ is set to $\tfrac12$ and $\beta$ to $1.1$. ``Total iterations'' refers to total the number of block coordinate ascent iterations used in the subroutine that computes the proximal steps approximately. $F_\star$ is approximated by the smallest objective values encountered in $2.10^4$ total iterations of block coordinate ascent.} \label{fig:CUR}
		\vspace{-0.0cm}
\end{figure}

\subsection{ Total variation regularization} In this section, we compare the behaviors of the accelerated inexact forward backward method (\Cref{eq:algo2}) with different tunings, on the classical problem of deblurring through total variation regularization~\citep{rudin1992nonlinear,rudin1994total,wang2008new}. Given a blurred image $Y \in \mathrm{R}^{n\times n}$ and a blurring operator $A$, the problem consists in solving 
\[\min_X \,F(X) \equiv \underbrace{\tfrac{1}{2}\|AX-Y\|_F^2}_{f(X)} + \underbrace{\lambda_{\text{reg}}\sum_{i,j = 0}^{n}\|(\nabla X)_{i,j}\|_2  + \tfrac{\mu_{\mathrm{reg}}}{2}\|X\|^2_F}_{g(X)},\]
where $\nabla$ is the discrete gradient of an image, see e.g.,~\cite[Equation (2.4)]{chambolle2016introduction}. One way of dealing with this problem is to approximate the proximal operator of the discrete total variation plus the Tikhonov regularization. As in~\citet{villa2013accelerated,millan2019inexact}, we apply FISTA~\citep{beck2009fast} on the dual of the proximal subproblem (which is provided e.g., in~\cite[Example 3.1]{chambolle2016introduction}), which we use in the accelerated inexact forward-backward method. 

In the experiments $Y$ is the popular $256\times256$ greyscale boat image (see e.g.,~\url{http://sipi.usc.edu/database/}). We blur $Y$ via a $5 \times 5$ box blur kernel $A$, and add a Gaussian noise of standard deviation $0.01$ times the mean of the blurred image and zero mean to the picture. Some results are detailed in \cref{fig:TV} where we plot gaps between the objective function values at the iterates of \Cref{eq:algo2} and the optimal objective value versus the number of iterations of \Cref{eq:algo2} (left) and versus the total number iterations of FISTA on the dual subproblem (right).

\begin{figure}[!ht]
\centering
\begin{tabular}{ll}
\begin{tikzpicture}
			\begin{semilogyaxis}[legend pos=outer north east, legend style={draw=none},legend cell align={left},plotOptions, ymin=0.0001, ymax=1e7,xmin=-1,xmax=300,width=.33\linewidth, ylabel={$F(x_{k})-F_\star$}, xlabel={iterations $k$}]
			
			\addplot [colorP1] table [x=N,y=f,col sep=space] {data/TV_boat_L_1.000000e+00_mu_1.000000e-02_sigma_8.000000e-01_zeta_0.000000e+00_xi_zero_maxiter_70000_inneriter_10000_no_mu.txt};
			\addplot [colorP2] table [x=N,y=f,col sep=space] {data/TV_boat_L_1.000000e+00_mu_1.000000e-02_sigma_8.000000e-01_zeta_0.000000e+00_xi_zero_maxiter_70000_inneriter_10000_no_mu_backtrack.txt};
			\addplot [colorP3] table [x=N,y=f,col sep=space] {data/TV_boat_L_1.000000e+00_mu_1.000000e-02_sigma_8.000000e-01_zeta_0.000000e+00_xi_zero_maxiter_70000_inneriter_10000.txt};
			\addplot [colorP4] table [x=N,y=f,col sep=space] {data/TV_boat_L_1.000000e+00_mu_1.000000e-02_sigma_8.000000e-01_zeta_0.000000e+00_xi_zero_maxiter_70000_inneriter_10000_backtrack.txt};
			\end{semilogyaxis}
\end{tikzpicture}
    & \begin{tikzpicture}
			\begin{semilogyaxis}[legend pos=outer north east, legend style={draw=none},legend cell align={left},plotOptions, ymin=0.0001, ymax=1e7,xmin=-500,xmax=70000,width=.33\linewidth, xlabel={total iterations} ]
			
			\addplot [colorP1] table [x=cs,y=f,col sep=space] {data/TV_boat_L_1.000000e+00_mu_1.000000e-02_sigma_8.000000e-01_zeta_0.000000e+00_xi_zero_maxiter_70000_inneriter_10000_no_mu.txt};
			\addplot [colorP2] table [x=cs,y=f,col sep=space] {data/TV_boat_L_1.000000e+00_mu_1.000000e-02_sigma_8.000000e-01_zeta_0.000000e+00_xi_zero_maxiter_70000_inneriter_10000_no_mu_backtrack.txt};
			\addplot [colorP3] table [x=cs,y=f,col sep=space] {data/TV_boat_L_1.000000e+00_mu_1.000000e-02_sigma_8.000000e-01_zeta_0.000000e+00_xi_zero_maxiter_70000_inneriter_10000.txt};
			\addplot [colorP4] table [x=cs,y=f,col sep=space] {data/TV_boat_L_1.000000e+00_mu_1.000000e-02_sigma_8.000000e-01_zeta_0.000000e+00_xi_zero_maxiter_70000_inneriter_10000_backtrack.txt};
			
			\addlegendentry{$\mu = 0$}
			\addlegendentry{$\mu = 0 \,+ $ (Btr)}
			\addlegendentry{$\mu = \mu_{\mathrm{reg}} $}
			\addlegendentry{$\mu = \mu_{\mathrm{reg}} \,+ $ (Btr)}
			\end{semilogyaxis}
\end{tikzpicture}
\\
\begin{tikzpicture}
			\begin{semilogyaxis}[legend pos=outer north east, legend style={draw=none},legend cell align={left},plotOptions, ymin=0.0001, ymax=1e7,xmin=-1,xmax=300,width=.33\linewidth, ylabel={$F(x_{k})-F_\star$}, xlabel={iterations $k$}]
			
			\addplot [colorP1] table [x=N,y=f,col sep=space] {data/TV_boat_L_1.000000e+00_mu_1.000000e-02_sigma_0.000000e+00_zeta_0.000000e+00_xi_poly2_maxiter_70000_inneriter_10000_backtrack.txt};
			\addplot [colorP2] table [x=N,y=f,col sep=space] {data/TV_boat_L_1.000000e+00_mu_1.000000e-02_sigma_0.000000e+00_zeta_8.000000e-01_xi_zero_maxiter_70000_inneriter_10000_backtrack.txt};
			\addplot [colorP3] table [x=N,y=f,col sep=space] {data/TV_boat_L_1.000000e+00_mu_1.000000e-02_sigma_8.000000e-01_zeta_0.000000e+00_xi_zero_maxiter_70000_inneriter_10000_backtrack.txt};
			\addplot [colorP4] table [x=N,y=f,col sep=space] {data/TV_boat_L_1.000000e+00_mu_1.000000e-02_sigma_5.000000e-01_zeta_5.000000e-01_xi_zero_maxiter_70000_inneriter_10000_backtrack.txt};

			\addplot [colorP5] table [x=N,y=f,col sep=space] {data/TV_boat_L_1.000000e+00_mu_1.000000e-02_sigma_0.000000e+00_zeta_0.000000e+00_xi_lin_maxiter_70000_inneriter_10000_backtrack.txt};
			\end{semilogyaxis}
\end{tikzpicture}
    & \begin{tikzpicture}
			\begin{semilogyaxis}[legend pos=outer north east, legend style={draw=none},legend cell align={left},plotOptions, ymin=0.0001, ymax=1e7,xmin=-500,xmax=70000,width=.33\linewidth, xlabel={total iterations}]
			\addplot [colorP1] table [x=cs,y=f,col sep=space] {data/TV_boat_L_1.000000e+00_mu_1.000000e-02_sigma_0.000000e+00_zeta_0.000000e+00_xi_poly2_maxiter_70000_inneriter_10000_backtrack.txt};
			
			\addplot [colorP2] table [x=cs,y=f,col sep=space] {data/TV_boat_L_1.000000e+00_mu_1.000000e-02_sigma_0.000000e+00_zeta_8.000000e-01_xi_zero_maxiter_70000_inneriter_10000_backtrack.txt};
			\addplot [colorP3] table [x=cs,y=f,col sep=space] {data/TV_boat_L_1.000000e+00_mu_1.000000e-02_sigma_8.000000e-01_zeta_0.000000e+00_xi_zero_maxiter_70000_inneriter_10000_backtrack.txt};
			\addplot [colorP4] table [x=cs,y=f,col sep=space] {data/TV_boat_L_1.000000e+00_mu_1.000000e-02_sigma_5.000000e-01_zeta_5.000000e-01_xi_zero_maxiter_70000_inneriter_10000_backtrack.txt};
			
			\addplot [colorP5] table [x=cs,y=f,col sep=space] {data/TV_boat_L_1.000000e+00_mu_1.000000e-02_sigma_0.000000e+00_zeta_0.000000e+00_xi_lin_maxiter_70000_inneriter_10000_backtrack.txt};
			\addlegendentry{$\xi_k = \tfrac{C}{(k+1)^2}$}
			\addlegendentry{$\zeta_k = 0.8$}
			\addlegendentry{$\sigma_k = 0.8$}
			\addlegendentry{$\sigma_k = 0.5$, $\zeta_k=0.5$}
			\addlegendentry{$\xi_k = C(1-\sqrt{\mu_{\mathrm{reg}}/L})^{k}$}
			\end{semilogyaxis}
\end{tikzpicture}

\end{tabular}
		\caption{\Cref{eq:algo2} on TV regularization. The initial step size is set to $\lambda_0 = \tfrac{1-\sigma_0^2}{L}$, the initial $L$ and $\lambda_{\mathrm{reg}}$ to $1$ and $\mu_{reg}$ to $10^{-2} L$. Top: $\sigma_k = 0.8$ , $\zeta_k = 0$ and $\xi_k = 0$. Bottom: accelerated inexact forward-backward with $\mu=\mu_{\mathrm{reg}}$ and backtracking. When backtracking is used, $\alpha$ is set to $\tfrac12$ and $\beta$ to $1.1$. ``Total iterations'' refers to total the number of FISTA iterations used in the subroutine that computes the proximal steps approximately.  $F_\star$ is approximated by the smallest objective values encountered in $2.10^4$ total FISTA iterations.} \label{fig:TV}
		\vspace{-0.5cm}
\end{figure}
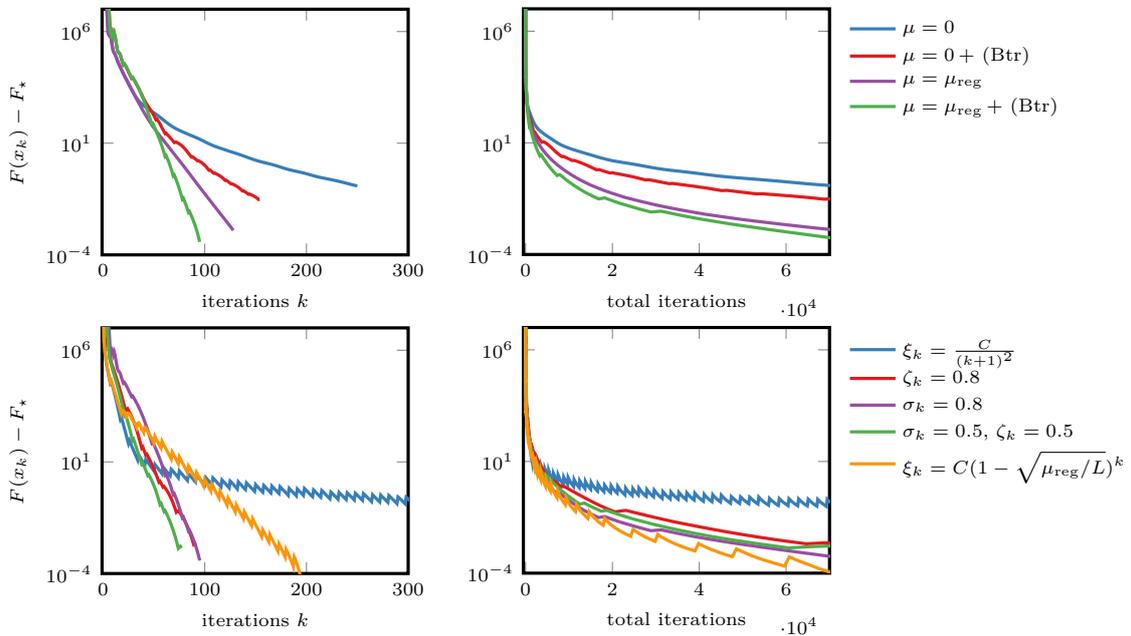

\newpage
\section{An accelerated hybrid proximal extragradient method}\label{s:ahpe}
In this section, we provide an improved analysis for the specific case $f=0$ (no smooth convex term in~\eqref{eq:opt}). This type of methods is often used as a \emph{globalization} strategy for higher-order methods, see~\citet{monteiro2013accelerated}. The version presented in this section allows exploiting the possible strong convexity of the objective, which was not incorporated in previous versions of the method, to the best of our knowledge.


\subsection{ Algorithm}
\begin{oframed}
	\textbf{An accelerated hybrid proximal extragradient method} (\Cref{eq:ahpe}) \label[algorithm]{eq:ahpe}
	\begin{itemize}
		\item[] {\bf{}Input:} \begin{itemize}
		    \item Objective function: $g\in\Fmu(\Rd)$.
		    \item Initial point: $x_0\in\mathbb{R}^d$.
		    \item Step sizes: $\{\lambda_k\}_k$ with $\lambda_k >0$.
		    \item Tolerance parameters: sequence $\{\sigma_k\}_k$ with  $\sigma_k\in[0,1]$. 
		\end{itemize}
		\item[] {\bf{}Initialization:} $z_0=x_0$, $A_0=0$. 
		\item[] {\bf{}Run:} For $k=0,1,\hdots$:
		\begin{equation}\notag
			\begin{aligned}
			A_{k+1}&=A_k+(2(1-\sigma_k)+\lambda_{k}\mu)\lambda_{k}\tfrac{1 + 2 A_k\mu+ \sqrt{1 +4A_k(1+A_k\mu)\tfrac{(1+\lambda_k\mu)^2 - \sigma_k(\sigma_k+\lambda_k\mu)}{(2(1-\sigma_k)+\lambda_{k}\mu)\lambda_{k}}}}{{2(1-\sigma_k^2 + \lambda_{k}\mu\sigma_k)}}\\
            y_{k}&=x_k+\tfrac{(A_{k+1}-A_k)(1+\mu A_k)}{A_{k+1}+\mu A_k(2 A_{k+1}-A_k)}(z_k-x_{k})\\
            \varepsilon_k &= \tfrac{\sigma_k^2}{2(1+\lambda_k \mu)^2} \|x_{k+1}-y_k\|^2\\
			(x_{k+1},\;v_{k+1})&\approx_{\varepsilon_k,\mu}\left(\prox_{\lambda_{k} g}(y_{k}),\prox_{g^*/\lambda_{k}}(\tfrac{y_{k}}{\lambda_k})\right) \\
			{z}_{k+1}&=z_{k}+\tfrac{A_{k+1}-A_k}{1+\mu A_{k+1}}\left(\mu(x_{k+1}-z_k) -v_{k+1} \right)\\
			\end{aligned}
			\end{equation}
			\item[] {\bf{}Output:} $x_{k+1}$
		\end{itemize}  
	\end{oframed}
When $\mu=0$ and $\sigma_k$ is fixed, this method actually reduces to the optimized relatively inexact proximal point algorithm from \cite[(ORIPPA)]{barre2020principled_new}. In this case, the growth rate of the sequence $\{A_k\}_k$ is essentially
\[ A_{k}\geq \tfrac{1}{4}\left(\sum_{i=0}^{k-1} \sqrt{\tfrac{2\lambda_i}{(1+\sigma_i)}}\right)^2 \geq O(k^2)\quad \text{for } k \geq 1,\]
when the parameters $\{\lambda_k\}_k$, $\{\sigma_k\}_k$ are well chosen (e.g., constant parameters). When $\mu>0$, the sequence $\{A_k\}_k$ grows as
\begin{align*}
A_{k+1} &\geq A_k\left(1 + \tfrac{2(1-\sigma_k)+\lambda_k\mu}{1-\sigma_k^2+\lambda_k\mu\sigma_k}\left(\lambda_k\mu + \sqrt{\lambda_k\mu\tfrac{(1+\lambda_k\mu)^2 - \sigma_k(\sigma_k+\lambda_k\mu)}{(2(1-\sigma_k)+\lambda_{k}\mu)}}\right) \right)\\
&= A_k\left( \tfrac{(1+\lambda_k\mu)^2 - \sigma_k(\sigma_k+\lambda_k\mu)+\sqrt{\lambda_k\mu(2(1-\sigma_k)+\lambda_{k}\mu)((1+\lambda_k\mu)^2 - \sigma_k(\sigma_k+\lambda_k\mu))}}{1-\sigma_k^2+\lambda_k\mu\sigma_k}\right)\\
&= A_k\left(\tfrac{(1+\lambda_k\mu)^2 - \sigma_k(\sigma_k+\lambda_k\mu) - \lambda_k\mu(2(1-\sigma_k)+\lambda_{k}\mu)}{1-\sigma_k^2+\lambda_k\mu\sigma_k} \right)\big/\left(1 - \sqrt{\tfrac{\lambda_k \mu (2(1-\sigma_k) + \lambda_k\mu)}{(1+\lambda_k \mu)^2 - \sigma_k(\sigma_k+\lambda_k \mu)}}\right)\\
    &= A_k\big/\left(1 - \sqrt{\tfrac{\lambda_k \mu (2(1-\sigma_k) + \lambda_k\mu)}{(1+\lambda_k \mu)^2 - \sigma_k(\sigma_k+\lambda_k \mu)}}\right),
\end{align*}
with $A_1 = \lambda_0\tfrac{2(1-\sigma_0)+\lambda_0\mu}{(1-\sigma_0^2+\lambda_0\mu\sigma_0)}\geq \lambda_0$.
In particular we recover the rate of the inexact accelerated forward-backward method when $\sigma_k = 1$. In addition, we notice that 
\[ 1 - \sqrt{\tfrac{\lambda_k \mu (2 + \lambda_k\mu - 2\sigma_k)}{(1+\lambda_k \mu)^2 - \sigma_k(\sigma_k+\lambda_k \mu)}} \sim 1-\sqrt{\tfrac{2}{1+\sigma_k}\lambda_k \mu},\]
when $\lambda_k \mu \ll 0$.

\begin{theo}\label{thm:ahpe}
    Let $g\in \Fmu(\Rd)$, $k\geq 0$, a parameter $\sigma_k\in[0,1]$ and some $\lambda_k > 0$. For any $x_k,z_k\in\Rd$ and $A_k\geq 0$, it holds that
    \[A_{k+1}(g(x_{k+1}) - g(x_\star))+\tfrac{1+\mu A_{k+1}}{2}\|z_{k+1}-x_\star\|^2 \leq  A_{k}(g(x_{k}) - g(x_\star))+\tfrac{1+\mu A_k}{2}\|z_{k}-x_\star\|^2,\]
    with $x_\star\in \argmin_x g(x)$, and where $z_{k+1}$, $x_{k+1}$ are constructed by one iteration of \Cref{eq:ahpe}.
\end{theo}
\begin{proof}
    The proof of this theorem is deferred to \cref{subsec:proof-AHPE}
\end{proof}
Just as for the its forward-backward version, one can obtain a final worst-case guarantee driven by the growth rate of the sequence $\{A_k\}_k$. 
\begin{coro}
Let $g \in \Fmu(\Rd)$, $\{\lambda_k\}_k$ be a sequence of positive parameters, and a sequence (relative error parameters) $\{\sigma_k\}_k$ satisfying $\sigma_k\in[0,1]$. Let $x_N\in\Rd$ be the output after $N\in\N^*$ iterations of \Cref{eq:ahpe} on $g$ initiated at $x_0\in\Rd$, it holds that
\[ g(x_N) - g(x_\star) \leq \tfrac{1}{2A_N}\normsq{x_0-x_\star},\]
where $x_\star \in \argmin_x g(x)$.
\end{coro}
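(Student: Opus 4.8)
The plan is to reproduce, essentially verbatim, the telescoping Lyapunov argument already used for \cref{cor:FB}, since \cref{thm:ahpe} is exactly the per-iteration potential decrease that this corollary needs, now in the cleaner form with no absolute-error term. Concretely, I would introduce the potential
\[ \Phi_k = A_k(g(x_k)-g(x_\star)) + \tfrac{1+\mu A_k}{2}\normsq{z_k-x_\star}, \qquad k\geq 0, \]
and observe that \cref{thm:ahpe}, applied at each iteration, reads precisely $\Phi_{k+1}\leq \Phi_k$ (the inequality in the theorem has no $\xi_k$ on its right-hand side, in contrast with \eqref{eq:deacres-potential}). Thus the sequence $\{\Phi_k\}_k$ is nonincreasing.

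The key steps, in order, are the following. First, nest the inequalities to obtain $\Phi_N\leq \Phi_{N-1}\leq\cdots\leq\Phi_0$. Second, evaluate $\Phi_0$ using the initialization $z_0=x_0$ and $A_0=0$, which gives $\Phi_0 = \tfrac12\normsq{x_0-x_\star}$ (the function-value term vanishes because $A_0=0$). Third, lower-bound the final potential by dropping its nonnegative quadratic part, i.e.\ $A_N(g(x_N)-g(x_\star))\leq \Phi_N$. Chaining these three observations yields
\[ A_N(g(x_N)-g(x_\star)) \leq \Phi_N \leq \Phi_0 = \tfrac12\normsq{x_0-x_\star}, \]
and dividing by $A_N>0$ (which holds for $N\in\N^*$, since $A_1\geq \lambda_0>0$ as noted before the theorem statement) gives the claimed bound.

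I do not anticipate any genuine obstacle here: all the analytic content is packaged inside \cref{thm:ahpe}, and the corollary is a purely mechanical telescoping. The only minor point to keep honest is that the argument requires $A_N>0$ so that the final division is legitimate; this is guaranteed by $A_1\geq\lambda_0>0$ together with the monotone growth of $\{A_k\}_k$ established in the discussion preceding the theorem. Everything else reduces to the nonnegativity of $\tfrac{1+\mu A_N}{2}\normsq{z_N-x_\star}$ and the vanishing of the $A_0$-weighted term.
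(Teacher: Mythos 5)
Your proposal is correct and matches the paper's argument: the paper proves this corollary by the same telescoping of the potential $\Phi_k = A_k(g(x_k)-g(x_\star)) + \tfrac{1+\mu A_k}{2}\normsq{z_k-x_\star}$ via \cref{thm:ahpe}, exactly as in \cref{cor:FB} but without the $\xi_k$ term. Your additional remark on $A_N>0$ is a harmless (and correct) bookkeeping point the paper leaves implicit.
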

\begin{proof}
    The proof follows from the same lines as that of~\cref{cor:FB}, using \cref{thm:ahpe} instead of \cref{thm:deacres-potential}.
\end{proof}
\subsection{ Proof of \cref{thm:ahpe}}\label{subsec:proof-AHPE}
The proof follows the same structure as that of in \cref{subsec:proof-FB}, and simply consists in reformulating a weighted sum of inequalities.
\begin{proof}
    First we consider $\sigma_k \in (0,1]$ as the case $\sigma_k=0$ requires a particular treatment.
    Let $w_{k+1} \in \Rd$ such that $v_{k+1}-\mu x_{k+1} + \mu w_{k+1} \in \partial g(w_{k+1})$. Using \eqref{eq:pdgap-2}, this leads to
    \begin{align*}
        \PDg_{\tfrac{\lambda_{k}}{1+\mu\lambda_{k}} \left(g(\cdot)-\tfrac{\mu}{2}\|\cdot\|^2 \right)}(x_{k+1},v_{k+1}-\mu x_{k+1};\tfrac{y_k}{1+\mu\lambda_{k}}) =& \;\tfrac{1}{2(1+\lambda_k\mu)^2}\normsq{x_{k+1} - y_k +\lambda_k v_{k+1}}\\
    &+ \tfrac{\lambda_k}{1+\lambda_k\mu}\Big(g(x_{k+1})-g(w_{k+1})+\tfrac{\mu}{2}\normsq{x_{k+1}-w_{k+1}} \\
    & - \langle x_{k+1}-w_{k+1},v_{k+1} \rangle \Big).
    \end{align*} 
    The proof consists in performing a weighted sum of the following inequalities:
    \begin{itemize}
        \item strong convexity between $w_{k+1}$ and $x_\star$ with weight $\nu_1 = A_{k+1} - A_k$
        \[g(x_\star)\geq g(w_{k+1}) + \langle v_{k+1}-\mu x_{k+1}+\mu w_{k+1}, x_\star-w_{k+1}\rangle + \tfrac{\mu}{2}\normsq{w_{k+1}-x_\star},\]
        \item strong convexity between $w_{k+1}$ and $x_{k}$ with weight $\nu_2 = A_k$
        \[g(x_k)\geq g(w_{k+1}) + \langle v_{k+1}-\mu x_{k+1}+\mu w_{k+1}, x_k-w_{k+1}\rangle + \tfrac{\mu}{2}\normsq{w_{k+1}-x_{k}},\]
        \item strong convexity between $w_{k+1}$ and $x_{k+1}$ with weight $\nu_3 = \tfrac{A_{k+1}(1-\sigma_k+\lambda_k\mu)}{\sigma_k}$
        \[g(x_{k+1})\geq g(w_{k+1}) + \langle v_{k+1}-\mu x_{k+1}+\mu w_{k+1}, x_{k+1}-w_{k+1}\rangle + \tfrac{\mu}{2}\normsq{w_{k+1}-x_{k+1}},\]
        \item approximation requirement on $x_{k+1}$ with weight $\nu_4 = \tfrac{A_{k+1}}{\lambda_k\sigma_k}$
        \[\begin{aligned}
        \tfrac{\sigma_k^2}{2}\normsq{x_{k+1}-y_k}\geq&\,\tfrac{1}{2}\normsq{x_{k+1} - y_k +\lambda_k v_{k+1}}+ \lambda_k(1+\lambda_k\mu)\Big(g(x_{k+1})-g(w_{k+1})\\
        &\,+\tfrac{\mu}{2}\normsq{x_{k+1}-w_{k+1}}
    - \langle x_{k+1}-w_{k+1},v_{k+1} \rangle \Big).
        \end{aligned}\]
    \end{itemize}
    Substituting $y_k$ and $z_{k+1}$ in the weighted sum, that is
    \begin{equation*}
        \begin{array}{ccc}
             y_k &=& x_k+\tfrac{(A_{k+1}-A_k)(1+\mu A_k)}{A_{k+1}+\mu A_k(2 A_{k+1}-A_k)}(z_k-x_{k})  \\
             z_{k+1} &=& z_{k}+\tfrac{A_{k+1}-A_k}{1+\mu A_{k+1}}\left(\mu(x_{k+1}-z_k) -v_{k+1}\right),
        \end{array}
    \end{equation*}
     the weighted sum is equivalently reformulated as
    \begin{align*}
        &A_{k+1}(g(x_{k+1}) - g(x_\star)) + \tfrac{1+\mu A_{k+1}}{2}\normsq{z_{k+1}-x_\star} \\
        &\leq A_{k}(g(x_{k}) - g(x_\star)) + \tfrac{1+\mu A_{k}}{2}\normsq{z_{k}-x_\star}\\
        &\,\,\,\,\,- \tfrac{A_k(A_{k+1}-A_k)\mu (1 + A_k \mu)}{2(A_{k+1} + A_k(2A_{k+1}-A_k)\mu)}\normsq{x_k-z_k}\\
        &\,\,\,\,\,-\tfrac{\lambda_k}{2}\left(\tfrac{\lambda_k\mu(A_{k+1}+A_k(2A_{k+1}-A_k)\mu)}{1+A_{k+1}\mu}  \right)\normsq{\tfrac{y_k-x_{k+1}}{\lambda_k} - \tfrac{\mu(A_{k+1}^2+A_k^2\sigma_k)+A_{k+1}(1-\sigma_k -2A_k\mu\sigma_k)}{A_{k+1}(1+A_{k+1\mu})(1-\sigma_k^2) +\lambda_k \mu\sigma_k(A_{k+1}+A_k(2A_{k+1}-A_k)\mu)}v_{k+1}}\\
        &\,\,\,\,\,- \tfrac{\lambda_k(1-\sigma_k^2)A_{k+1}}{2\sigma_k}\normsq{\tfrac{y_k-x_{k+1}}{\lambda_k} - \tfrac{\mu(A_{k+1}^2+A_k^2\sigma_k)+A_{k+1}(1-\sigma_k -2A_k\mu\sigma_k)}{A_{k+1}(1+A_{k+1\mu})(1-\sigma_k^2) +\lambda_k \mu\sigma_k(A_{k+1}+A_k(2A_{k+1}-A_k)\mu)}v_{k+1}}\\
        &\,\,\,\,\,+ \tfrac{A_{k+1}((2A_kA_{k+1}-A_{k}^2)(1-\sigma_k^2+\lambda_k^2\mu^2+\lambda_k\mu(2-\sigma_k)) - A_{k+1}^2(1-\sigma_k^2+\lambda_k\mu\sigma_k)+A_{k+1}\lambda_k(2(1-\sigma_k)+\lambda_k\mu))}{2(A_k^2\lambda_k\mu^2\sigma_k-A_{k+1}^2\mu(1-\sigma_k^2)-A_{k+1}(1-\sigma_k^2+\lambda_k\mu\sigma_k(1+2A_k\mu)))}\normsq{v_{k+1}}\\
        &\leq A_{k}(g(x_{k}) - g(x_\star)) + \tfrac{1+\mu A_{k}}{2}\normsq{z_{k}-x_\star}\\
        &\,\,\,\,\,+ \tfrac{A_{k+1}((2A_kA_{k+1}-A_{k}^2)(1-\sigma_k^2+\lambda_k^2\mu^2+\lambda_k\mu(2-\sigma_k)) - A_{k+1}^2(1-\sigma_k^2+\lambda_k\mu\sigma_k)+A_{k+1}\lambda_k(2(1-\sigma_k)+\lambda_k\mu))}{2(A_k^2\lambda_k\mu^2\sigma_k-A_{k+1}^2\mu(1-\sigma_k^2)-A_{k+1}(1-\sigma_k^2+\lambda_k\mu\sigma_k(1+2A_k\mu)))}\normsq{v_{k+1}}\\
        &=A_{k}(g(x_{k}) - g(x_\star)) + \tfrac{1+\mu A_{k}}{2}\normsq{z_{k}-x_\star},
    \end{align*}
    where the inequality in the second to last line comes from the fact that factors in front of squared Euclidean norms are nonpositive, and the last equality from the fact that $A_{k+1}$ is chosen such that it satisfies
    \[(2A_kA_{k+1}-A_{k}^2)(1-\sigma_k^2+\lambda_k^2\mu^2+\lambda_k\mu(2-\sigma_k)) - A_{k+1}^2(1-\sigma_k^2+\lambda_k\mu\sigma_k)+A_{k+1}\lambda_k(2(1-\sigma_k)+\lambda_k\mu) = 0.\]
    
        Note that the intermediary expressions largely simplifies when choosing this $A_{k+1}$, as the last term disappears, and the two other squared Euclidean norms become (up to nonpositive multiplicative factors) $\normsq{x_k-z_k}$ and $\normsq{\tfrac{y_k-x_{k+1}}{\lambda_k} - \tfrac{1-\sigma_k + \lambda_k\mu}{1-\sigma_k^2+\lambda_k\mu}v_{k+1}}$.
        
    For the case $\sigma_k = 0$ (i.e., exact proximal computations) $v_{k+1}\in \partial g(x_{k+1})$ and we proceed as previously by performing the following weighted sum of inequalities: 
    \begin{itemize}
        \item strong convexity between $x_{k+1}$ and $x_\star$ with weight $\nu_1 = A_{k+1}-A_k$
        \[g(x_\star)\geq g(x_{k+1}) + \langle v_{k+1}, x_\star-x_{k+1}\rangle + \tfrac{\mu}{2}\normsq{x_{k+1}-x_\star},\]
        \item strong convexity between $x_{k+1}$ and $x_{k}$ with weight $\nu_2 = A_k$
        \[g(x_k)\geq g(x_{k+1}) + \langle v_{k+1}, x_k-x_{k+1}\rangle + \tfrac{\mu}{2}\normsq{x_{k+1}-x_{k}}.\]
    \end{itemize}
    Substituting $y_k$, $x_{k+1}$ and $z_k$ in the weighted sum, that is 
        \begin{equation*}
        \begin{array}{rcl}
             y_k &=& x_k+\tfrac{(A_{k+1}-A_k)(1+\mu A_k)}{A_{k+1}+\mu A_k(2 A_{k+1}-A_k)}(z_k-x_{k})  \\
             x_{k+1} &=& y_k - \lambda_kv_{k+1}\\
             z_{k+1} &=& z_{k}+\tfrac{A_{k+1}-A_k}{1+\mu A_{k+1}}\left(\mu(x_{k+1}-z_k) -v_{k+1}\right),
        \end{array}
    \end{equation*}
    the weighted sum is equivalently reformulated as
    \begin{align*}
        &A_{k+1}(g(x_{k+1}) - g(x_\star)) + \tfrac{1+\mu A_{k+1}}{2}\normsq{z_{k+1}-x_\star} \\
        &\leq A_{k}(g(x_{k}) - g(x_\star)) + \tfrac{1+\mu A_{k}}{2}\normsq{z_{k}-x_\star}+ \tfrac{A_{k+1}(A_{k+1}-\lambda_k(2+\lambda_k\mu)) - A_k(2A_{k+1}-A_k)(1+\lambda_k\mu)^2}{2(1+A_{k+1}\mu)}\normsq{v_{k+1}}\\
        &\,\,\,\,\,- \tfrac{\mu A_k(A_{k+1}-A_k) (1 + A_k \mu)}{2(A_{k+1} + A_k(2A_{k+1}-A_k)\mu)}\normsq{x_k-z_k}\\
        &\leq A_{k}(g(x_{k}) - g(x_\star)) + \tfrac{1+\mu A_{k}}{2}\normsq{z_{k}-x_\star}+ \tfrac{A_{k+1}(A_{k+1}-\lambda_k(2+\lambda_k\mu)) - A_k(2A_{k+1}-A_k)(1+\lambda_k\mu)^2}{2(1+A_{k+1}\mu)}\normsq{v_{k+1}}\\
        & = A_{k}(g(x_{k}) - g(x_\star)) + \tfrac{1+\mu A_{k}}{2}\normsq{z_{k}-x_\star},
    \end{align*}
     where the inequality in the second to last line comes from the fact that factor in front of squared Euclidean norm is nonpositive, and the last equality from the fact that $A_{k+1}$ is chosen such that it satisfies
    \[A_{k+1}(A_{k+1}-\lambda_k(2+\lambda_k\mu)) - A_k(2A_{k+1}-A_k)(1+\lambda_k\mu)^2 = 0,\]
    when $\sigma_k = 0$.
\end{proof}

\section{Conclusion}\label{s:ccl}
In this note, we proposed an inexact accelerated forward-backward method for solving composite convex minimization problems, along with some worst-case guarantees. The method supports inexact evaluations of the proximal subproblems, backtracking line-search on the smoothness parameter, and allows exploiting the possible strong convexity of one of the component in the objective function. The analysis relies on a now standard Lyapunov argument of the same type as that of~\citet{Nesterov:1983wy}, and the numerical behavior is illustrated on a factorization and a total variation problem.

We further provide a version of the hybrid proximal extragradient method~\citep{monteiro2013accelerated} allowing to exploit strong convexity of the objective.

\subsection*{Acknowledgments}
The authors thank Alexander Gasnikov as well as an anonymous referee and an associate editor of OJMO for their careful feedbacks.

\bibliographystyle{plain+eid}
\bibliography{bib}

\end{document}